\newtheorem{theorem}{Theorem}
\newtheorem{lemma}{Lemma}
\newtheorem{proposition}{Proposition}
\newtheorem{remark}{Remark}
\newtheorem{definition}{Definition}
\newcommand{\tr}{^\mathsf{T}}
\newcommand{\proofend}{\hfill $\Box$ }
\newcommand{\dsp}{\displaystyle}
\newcommand{\newtext}[1]{\textcolor{black}{#1}}
\newcommand{\newtextb}[1]{\textcolor{black}{#1}}
\newcommand{\eps}{\varepsilon}
\newcommand{\mN}{\mathbb{N}}
\newcommand{\mR}{\mathbb{R}}
\newcommand{\hbK}{\hat {\bf K}}
\newcommand{\D}{{\cal T}}
\newcommand{\hD}{\hat {\cal T}}
\date{\empty}
\title{Optimal time for the \newtext{controllability} of linear hyperbolic systems in one dimensional space}
\author{Jean-Michel Coron \thanks{Sorbonne Universit\'{e}, Universit\'{e} Paris-Diderot SPC, CNRS, INRIA, Laboratoire Jacques-Louis Lions, \'{e}quipe Cage, Paris, France, coron@ann.jussieu.fr.} \and Hoai-Minh Nguyen \thanks{Ecole Polytechnique F\'ed\'erale de Lausanne, EPFL,  SB MATHAA CAMA, Station 8,  CH-1015 Lausanne, Switzerland,  hoai-minh.nguyen@epfl.ch. }}
\begin{document}
\maketitle

\begin{abstract}
We are concerned about the \newtext{controllability} of a general linear hyperbolic system of the form  $\partial_t w (t, x) =  \Sigma(x) \partial_x w (t, x) + \gamma C(x) w(t, x) $ ($\gamma \in \mR$) in one space dimension  using boundary controls on one side. More precisely, we establish  the optimal time for the \newtext{null and exact} controllability of the hyperbolic system for generic $\gamma$. We also present examples which yield that the generic requirement is necessary.
In the case of constant $\Sigma$ and of two  positive directions, we
prove that the null-controllability is attained for any time greater than the optimal time for all $\gamma \in \mR$ and for  all  $C$ which is analytic if  the slowest negative direction can be alerted by  {\it both} positive directions. We also  show that the null-controllability is attained at the optimal time by a feedback law when  $C \equiv 0$.   Our approach is based on the  backstepping method paying  a special attention on the  construction of the kernel and the selection of controls.

\end{abstract}

\noindent {\bf Keywords}: hyperbolic systems, boundary controls, backstepping,  optimal time.

\noindent{\bf AMS}:  35F05, 35F15, 35B37, 58G16, 93C20.


\section{Introduction}

Linear hyperbolic systems in one dimensional space are frequently used
in modelling of many systems such as traffic flow, heat exchangers,  and fluids in open channels. The
stability and boundary stabilization of these hyperbolic systems
have been studied intensively in the literature, see,  e.g.,   \cite{BC} and the references therein. In this paper, we are concerned about the optimal time for the \newtext{null-controllability and exact controllability} of such systems using boundary controls on one side.  More precisely, we consider the system
\begin{equation}\label{Sys-1}
\partial_t w (t, x) =  \Sigma(x) \partial_x w (t, x) + \gamma C(x) w(t, x) \mbox{ for } (t, x)  \in \mR_+ \times (0, 1).
\end{equation}
Here $w = (w_1, \cdots, w_n)\tr : \mR_+ \times (0, 1) \to \mR^n$ ($n \ge 2$), $\gamma \in \mR$,   $\Sigma$ and $C$ are   $(n \times n)$ real matrix-valued functions defined in $[0,1]$. We assume that for every $x \in [0, 1]$,  $\Sigma(x)$ is diagonal with $m \ge 1$  distinct positive eigenvalues and $k = n - m \ge 1$  distinct negative eigenvalues. Using Riemann coordinates, one might assume that $\Sigma(x)$ is of the form
\begin{equation}\label{form-A}
\Sigma(x) = \mbox{diag} \big(- \lambda_1(x), \cdots, - \lambda_{k}(x),  \lambda_{k+1}(x), \cdots,  \lambda_{n}(x) \big),
\end{equation}
where
\begin{equation}\label{relation-lambda}
-\lambda_{1}(x) < \cdots <  - \lambda_{k} (x)< 0 < \lambda_{k+1}(x) < \cdots < \lambda_{k+m}(x).
\end{equation}
Throughout the paper, we assume that
\begin{equation}\label{cond-lambda}
\mbox{$\lambda_i$  is Lipschitz on $[0, 1]$  for $1 \le i \le n \,  (= k + m)$.}
\end{equation}
We are interested in the following type of boundary conditions and boundary controls. The boundary conditions at $x = 0$ are given by
\begin{equation}\label{bdry:x=0-w}
(w_1, \cdots, w_k)\tr (t, 0)  = B(w_{k+1}, \cdots, w_{k+m})\tr (t, 0) \mbox{ for } t \ge 0,
\end{equation}
for some $(k \times m)$ real {\it constant} matrix $B$, and the boundary controls  at $x = 1$ are
\begin{equation}\label{control:x=1-w}
w_{k+1}(t, 1) = W_{k+1}(t),  \quad \dots, \quad w_{k + m}(t, 1) = W_{k+m}(t) \mbox{ for } t \ge 0,
\end{equation}
where $W_{k +1}, \dots, W_{k + m}$ are controls. Our goal is to obtain  the optimal time for  the null-controllability and exact controllability of \eqref{Sys-1}, \eqref{bdry:x=0-w}, and \eqref{control:x=1-w}. \newtext{Let us recall that the control system \eqref{Sys-1}, \eqref{bdry:x=0-w}, and \eqref{control:x=1-w} is null-controllable (resp. exactly controllable) at the time $T>0$ if, for every initial data $w_0: (0,1)\to \mathbb{R}^n$ in $[L^2(0,1)]^n$ (resp. for every initial data $w_0: (0,1 )\to \mathbb{R}^n$ in $[L^2(0,1)]^n$  and for every (final) state $w_T:  (0,1 )\to \mathbb{R}^n$  in $[L^2(0,1)]^n$), there is a control $W=(W_{k+1},\ldots,W_{k+m})\tr :(0,T)\to \mathbb{R}^m$ in $[L^2(0,T)]^m$ such that the solution
of \eqref{Sys-1}, \eqref{bdry:x=0-w}, and \eqref{control:x=1-w} satisfying $w(0,x)=w_0(x)$ vanishes (resp. reaches $w_T$) at the time $T$: $w(T,x)=0$ (resp. $w(T, x)= w_T(x)$).}
\medskip
Set
\begin{equation}\label{def-tau}
\tau_i :=  \int_{0}^1 \frac{1}{\lambda_i(\xi)}  \, d \xi  \mbox{ for } 1 \le i \le n
\end{equation}
and
\begin{equation}\label{def-Top}
T_{opt} := \left\{ \begin{array}{cl}  \dsp \max \big\{ \tau_1 + \tau_{m+1}, \dots, \tau_k + \tau_{m+k}, \tau_{k+1} \big\} & \mbox{ if } m \ge k, \\[6pt]
\dsp \max \big\{ \tau_{k+1-m} + \tau_{k+1},  \tau_{k+2-m} + \tau_{k+2},  \dots, \tau_{k} + \tau_{k+m} \big\} &  \mbox{ if } m < k.
\end{array} \right.
\end{equation}

The first  result in this paper, which implies in particular  that one can reach the null-controllability of  \eqref{Sys-1}, \eqref{bdry:x=0-w}, and \eqref{control:x=1-w} at the time $T_{opt}$ for generic $\gamma$ (and $B$), is

\begin{theorem}\label{thm1} Assume that \eqref{relation-lambda} and \eqref{cond-lambda} hold. We define
\begin{equation}
{\cal B}: = \big\{B \in \mR^{k \times m}; \mbox{ such that  \eqref{cond-B-1} holds for \newtextb{ $1 \le i \le  \min\{k, m-1\}$}} \big\},
\end{equation}
where
\begin{multline}\label{cond-B-1}
\mbox{ the $i \times i$  matrix formed from the last $i$ columns and the last $i$ rows of $B$  is invertible.}
\end{multline}
Then
\begin{enumerate}
\item In the case $m = 1$, there exists a (linear) time independent feedback which yields the null-controllability at the time $T_{opt}$.

\item In the case $m  = 2$,  if  $B \in {\cal B}$, $B_{k1} \neq 0$, $\Sigma$ is constant,  and  $(T_{opt} =  \tau_k + \tau_{k+2} = \tau_{k-1} + \tau_{k+1}$ if $k \ge 2$ and $T_{opt} =  \tau_1 + \tau_3 = \tau_2$ if $k = 1)$, then there exists a non-zero constant matrix $C$ such that the system is {\bf not} null-controllable  at  the time  $T_{opt}$.

\item In the case $m \ge 2$, we have i) for each $B \in {\cal B}$, outside a discrete set of $\gamma$ in $\mR$, the control system \eqref{Sys-1}, \eqref{bdry:x=0-w}, and \eqref{control:x=1-w} is null-controllable at the time $T_{opt}$, and ii) for each $\gamma$ outside a discrete set in $\mR$, outside a set of zero measure of $B$ in ${\cal B}$, the control system \eqref{Sys-1}, \eqref{bdry:x=0-w}, and \eqref{control:x=1-w} is null-controllable at the time $T_{opt}$.

\end{enumerate}
\end{theorem}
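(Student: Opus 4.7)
The plan is to attack all three parts with the backstepping method, using a Volterra transformation of the second kind
\begin{equation*}
u(t,x) = w(t,x) - \int_0^x K(x,\xi)\, w(t,\xi)\, d\xi,
\end{equation*}
where the kernel $K(x,\xi)$ is an $n\times n$ matrix-valued function on the triangle $\D = \{(x,\xi): 0 \le \xi \le x \le 1\}$. I would choose $K$ so that $u$ satisfies a target system whose positive components are cascaded by the boundary condition at $x=0$ into the negative components in such a way that one can reach $u \equiv 0$ at exactly $T_{opt}$, by setting the controls $W_{k+1}, \dots, W_{k+m}$ to zero on carefully staggered time intervals. The ordering of the pairings $(\tau_i, \tau_{k+i})$ that defines $T_{opt}$ in \eqref{def-Top} is the combinatorial signature of this staggering: each positive mode is ``switched off'' as soon as the boundary matrix $B$ guarantees it no longer perturbs the slowest negative mode still to be annihilated.

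Writing out the condition that $u$ solves the target system produces a kernel PDE of hyperbolic first order in $\D$ with boundary conditions on the diagonal $\xi=x$ and the edge $\xi = 0$. For part (1), with $m=1$, the system essentially reduces to scalar equations which are solved by characteristics; one then reads off a bounded linear feedback $W_{k+1}(t) = \int_0^1 K(1,\xi)\, w(t,\xi)\, d\xi$ plus a combination of the traces $w_i(t,0)$, and the target $u$ decays in time $\tau_k+\tau_{k+1}=T_{opt}$. For part (3), one rewrites the kernel equation as a fixed-point problem $(I - \gamma \cK) K = K_0$ on a Banach space of continuous functions on $\D$, where $\cK$ is compact (its kernel is an iterated integral along characteristics, hence Hilbert--Schmidt-like). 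The Fredholm alternative then yields invertibility of $I-\gamma\cK$ for $\gamma$ outside a discrete subset of $\mR$; the resulting $K$ produces a target system that is null-controllable at time $T_{opt}$ by the staggered control strategy, provided that at each stage the relevant sub-block of $B$ is invertible, which is precisely the definition of $\cB$.

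The genericity in $B$ (item (ii) of part (3)) follows because, after fixing a non-exceptional $\gamma$, the obstruction to the backstepping construction is the vanishing of a certain real-analytic function of the entries of $B$; it suffices to exhibit one $B \in \cB$ for which this function is nonzero (which, for diagonal $\Sigma$ and suitable choice of entries, can be done explicitly), and the zero-measure statement follows from the analyticity. For part (2), I would construct the counterexample by analysing the adjoint system via the $d'Alembert$-type representation obtained by integrating along characteristics: with $\Sigma$ constant, $m=2$, and the critical coincidence $T_{opt}=\tau_k+\tau_{k+2}=\tau_{k-1}+\tau_{k+1}$, the observability inequality at time $T_{opt}$ reduces to a finite-dimensional linear system in the initial traces; choosing a non-zero constant $C$ that makes this finite-dimensional matrix singular produces an initial datum for which the observability constant is zero, hence, by duality, a state that cannot be driven to zero in time $T_{opt}$. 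The condition $B_{k1}\neq 0$ and the hypothesis $B\in\cB$ enter precisely to ensure that the degenerate mode is not artificially decoupled from the controls.

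The main obstacle I anticipate is the staggered choice of controls in part (3): the standard backstepping target gives null-controllability only in time $\sum_{i}\tau_{i}$, and lowering this to $T_{opt}$ requires that the $u_{k+j}$ be set to zero not all at $t=T_{opt}$ but on a decreasing sequence of subintervals of $(0,T_{opt})$, after which each negative component is annihilated through characteristics. Verifying that the resulting control lies in $[L^2(0,T_{opt})]^m$ and that each extinction step is compatible with the previous ones via the boundary matching dictated by $B\in\cB$ is where the argument becomes delicate; this is precisely the point where the ``special attention on the construction of the kernel and the selection of controls'' mentioned in the abstract is indispensable.
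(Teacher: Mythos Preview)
Your high-level strategy (backstepping plus a staggered selection of controls) matches the paper, but there is a genuine misplacement of the analytic Fredholm argument in part~(3), and this is not a cosmetic issue.

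In the paper, the kernel equation for $K$ is solvable for \emph{every} $\gamma\in\mR$ by a contraction/fixed-point argument (Lemma~\ref{lem2}); there is no exceptional discrete set at that stage. The point is that even after the backstepping transform, the target system is not decoupled: it reads $\partial_t u=\Sigma(x)\partial_x u+S(x)u(t,0)$, with $S$ as in \eqref{def-S}. The special boundary choices for $K$ only guarantee that $S_{++}$ is strictly upper triangular and the first $k$ columns of $S$ vanish; the off-diagonal blocks $S_{-+}$ remain. Consequently, setting the controls to zero on staggered intervals does \emph{not} drive $u$ to zero at $T_{opt}$. What the paper does instead is write down the conditions $a_1),\dots,a_k)$ that the traces $U=\big(u_{m+1}(\cdot,0),\dots,u_{m+k}(\cdot,0)\big)$ must satisfy for $u(T_{opt},\cdot)\equiv 0$, and reduce these (via the characteristic method and the invertibility conditions \eqref{cond-B-1}) to an integral equation $U+\mathcal K(U)=F$ on the product space $\mathcal X$ of \eqref{def-X}. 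It is \emph{this} operator $\mathcal K$ --- built from $S$, $B$, $\Sigma$ and hence analytic in $(\gamma,B)$, and Hilbert--Schmidt --- to which the analytic Fredholm theorem is applied; for small $\gamma$ one has $\mathcal K=0$ (since $S$ vanishes when $C\equiv 0$), so $I+\mathcal K$ is invertible, and the discrete exceptional set in $\gamma$ comes from here. Your proposal places the Fredholm step on the kernel construction $(I-\gamma\mathcal K)K=K_0$, which both gives the wrong object and, more importantly, leaves you with no mechanism to actually select the controls: you implicitly assume that once $K$ exists and $B\in\mathcal B$ the staggered zero-control strategy finishes the job, but it does not.

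For part~(2), your duality/observability route is plausible but different from the paper. The paper gives a direct construction: it writes down an explicit constant $C$ depending on two parameters $\alpha,\beta$ (see \eqref{form-K-C-3}), computes the corresponding $K$ and $S$ in closed form, and shows that the necessary equation for $u_4(\cdot,0)$ (coming from $a_2)$) becomes $b\,u_4(t,0)+\int_{t_2}^{t_1}\lambda_4\alpha\,u_4(s,0)\,ds=-a f(t)$; choosing $\alpha$ so that $b+\lambda_4\alpha(t_1-t_2)=0$ forces $\int f=0$, which fails for suitable initial data. This is more elementary than an observability argument and also explains transparently why $B_{k1}\neq 0$ (equivalently $a\neq 0$) and the coincidence $T_{opt}=\tau_k+\tau_{k+2}=\tau_{k-1}+\tau_{k+1}$ are needed.
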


Theorem \ref{thm1} is proved in Section~\ref{sect-thm1}. \newtextb{The optimality of $T_{opt}$ is shown in Proposition~\ref{pro-C} for $C \equiv 0$ (see also Remark \ref{rem-OPT}).}

\begin{remark}\rm In Proposition~\ref{pro-T2} in Section~\ref{sect-FP},
we present a null-controllability result, which holds for all $\gamma$ and $B \in {\cal B}$,  for a time which is larger than $T_{opt}$ but smaller than  $T_2$ defined in \eqref{def-T2} for $m \ge 2$.
\end{remark}

\newtext{Concerning the exact controllability, we have the following theorem whose proof is just a straightforward modification of the one of Theorem~\ref{thm1} (see Remark~\ref{rem-proof-thm-exact}).
\begin{theorem}\label{thm-exact}
\newtext{Assume that $m \ge k \ge 1$,  \eqref{relation-lambda} and \eqref{cond-lambda} hold. Define}
\begin{equation}
{\cal B}_e: = \big\{B \in \mR^{k \times m}; \mbox{ such that  \eqref{cond-B-1} holds for \newtext{ $1 \le i \le  k$}} \big\},
\end{equation}
Then,
i) for each $B \in {\cal B}_e$, outside a discrete set of $\gamma$ in $\mR$, the control system \eqref{Sys-1}, \eqref{bdry:x=0-w}, and \eqref{control:x=1-w} is exactly controllable at the time $T_{opt}$, and ii) for each $\gamma$ outside a discrete set in $\mR$, outside a set of zero measure of $B$ in ${\cal B}_e$, the control system \eqref{Sys-1}, \eqref{bdry:x=0-w}, and \eqref{control:x=1-w} is exactly controllable at the time $T_{opt}$.
\end{theorem}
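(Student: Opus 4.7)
The plan is to adapt the backstepping construction used for Theorem~\ref{thm1} rather than rebuild it from scratch, as the authors indicate in the statement. Since the optimal time $T_{opt}$ is the same for the two theorems in the regime $m \ge k$, the analytic machinery is identical; only the accounting at the boundary changes.

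First, I would reproduce the kernel construction of Theorem~\ref{thm1} verbatim: one searches for an invertible Volterra transformation
\[
\hat w(t,x) = w(t,x) - \int_0^x K(x,y)\, w(t,y)\, dy
\]
with $K$ solving a kernel boundary value problem on the triangle $\{0 \le y \le x \le 1\}$ whose purpose is to eliminate the coupling $\gamma C(x)$. The associated Fredholm alternative, combined with the analytic dependence of the kernel on $\gamma$, shows that $K$ exists outside a discrete set of $\gamma \in \mR$ and, for each such $\gamma$, outside a measure-zero subset of $B \in {\cal B}_e$. In the transformed variables $\hat w$, the evolution decouples into $n$ scalar transport equations whose only coupling is carried by the boundary relations. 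The target state $w_T$ is transformed into $\hat w_T$ by the same bounded and boundedly invertible Volterra map, so the problem reduces to exactly controlling a cascade of scalar transports, with $x=0$ coupling given by the matrix $B$ and $x=1$ controls acting through an invertible change of variables.

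The new task, compared with null-controllability, is to prescribe the final trace on each characteristic rather than merely annihilating it. The characteristic analysis of Theorem~\ref{thm1} already partitions $(0,T_{opt})$ into intervals on which each $W_{k+\ell}$ is free, and on each such interval the value of $\hat w$ at time $T_{opt}$ along the associated characteristic is an affine function of the control and of already-determined data. Proceeding from the fastest to the slowest characteristic, one sets up a triangular linear system of $k$ compatibility relations at $x=0$ that must be solved for the remaining free traces of $W_{k+1},\dots,W_{k+m}$. The assumption that \eqref{cond-B-1} holds for \emph{all} $1\le i\le k$, rather than for $1\le i\le \min\{k,m-1\}$, is precisely the extra invertibility needed to solve the top line of this triangular system: in the null case the slowest outgoing characteristic may be steered to zero without a compatibility relation, but in the exact case it must match a prescribed trace, which costs one more invertibility. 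For $m > k$ the two hypotheses coincide and nothing extra needs to be checked; the genuine work occurs in the borderline case $m = k$. The $L^2$ bounds on the resulting controls then follow from the invertibility of $K$ and of the relevant $k \times k$ minor of $B$, exactly as in Theorem~\ref{thm1}.

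The main obstacle I anticipate is the bookkeeping in the last step: correctly identifying which sub-matrix of $B$ controls which characteristic, and verifying that the strengthened hypothesis on ${\cal B}_e$ is exactly what makes the triangular system invertible. Once that is in place, the genericity in $\gamma$ and in $B$ is inherited automatically from the Fredholm/analytic argument behind the kernel construction in Theorem~\ref{thm1}.
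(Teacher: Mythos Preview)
Your high-level plan---adapt the proof of part~3) of Theorem~\ref{thm1} by replacing the zero target with a prescribed state---is exactly what the paper does (see Remark~\ref{rem-proof-thm-exact}). But your account of the mechanism contains two genuine errors that would derail the argument as written.

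First, the backstepping kernel $K$ exists for \emph{every} $(\gamma,B)$: this is Lemma~\ref{lem2}, proved by a contraction/fixed-point argument, with no Fredholm alternative in sight. The analytic dependence of $K$ on $(\gamma,B)$ is a regularity statement, not an existence restriction. So the discrete exceptional set of $\gamma$ does not come from the kernel construction, and placing it there leaves you with no explanation for where it actually arises.

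Second, the target system after backstepping is \emph{not} a family of decoupled scalar transports coupled only at the boundary. It is
\[
\partial_t u = \Sigma(x)\,\partial_x u + S(x)\,u(t,0),
\]
see \eqref{eq-u}, and the nonlocal source $S(x)u(t,0)$ is the whole point of the analysis. The boundary choices $BC_1)$--$BC_3)$ force the special structure \eqref{cond-S-0}--\eqref{cond-S} on $S$, but they do not kill it.

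The genericity in $\gamma$ enters at a later stage. Tracing the requirements $a_1),\dots,a_k)$ along characteristics (using the $S(x)u(t,0)$ term and the relations \eqref{cond-ck}--\eqref{cond-c1}) produces an equation
\[
U + {\cal K}(U) = F \quad \text{in } {\cal X}
\]
on the control-trace space ${\cal X}$ of \eqref{def-X}, where ${\cal K}$ is a Hilbert--Schmidt operator built from $S$, depending analytically on $(\gamma,B)$, and small when $\gamma$ is small. Analytic Fredholm theory then gives the discrete exceptional set for $\gamma$ (and, by the same token, the measure-zero set for $B$). For exact controllability the only change is that $a_\ell)$ becomes $u_\ell(T_{opt},x)=v_\ell(x)$, which adds terms $J_1,\dots,J_k$ (depending on the target $v$) to the right-hand side $F$; the operator ${\cal K}$ is \emph{unchanged}, so the same invertibility suffices. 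The final arrangement \eqref{final-arrangement} is likewise replaced by steering $u_l(T_{opt},\cdot)$ to $v_l$ rather than to $0$.

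Your remark that ${\cal B}={\cal B}_e$ when $m>k$ is correct, and your intuition that the extra condition at $i=k$ matters only when $m=k$ is in the right direction; but the place where that condition is consumed is in rewriting the system in $b_1)$ as an equation for $U_1$, not in a separate ``top line of a triangular system'' as you describe.
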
}


\newtext{\begin{remark} \rm In the case $k = m = 1$, the result of Theorem~\ref{thm-exact} holds for all $\gamma$ and $B \in B_e$, which was already proved in \cite{Russell78}. Our proof can be modified to obtain this result.
\end{remark}}

In the case where  $k \ge 1$, $m =  2$, $\Sigma$ is constant,  $B \in {\cal B}$, and $B_{k1} \neq 0$, we show that the system is {\it null-controllable} for any time greater than $T_{opt}$ for all $\gamma \in \mR$ and  $C$ analytic. More precisely, we have

\begin{theorem}\label{thm2} Let $k \ge 1$,  $m=2$, and  $T> T_{opt}$. Assume that \eqref{relation-lambda} holds, $B \in {\cal B}$   and $B_{k1} \neq 0$,  $\Sigma$ is constant,  and $C$ is analytic  on  $[0, L]$ \footnote{This means that $C$ is analytic in a  neighborhood of $[0, L]$.} where
\begin{equation}\label{def-L}
L =  \frac{\rho_k}{ \rho_k - 1} \quad \mbox{ with } \quad  \rho_k = \left\{ \begin{array}{cl} \frac{\lambda_{k+2}}{ \lambda_{k+1}}  & \mbox{ if } k = 1, \\[6pt]
\min\big\{ \mathop{\min}_{ 1 \le j < i \le k } \frac{\lambda_j}{\lambda_i}, \frac{\lambda_{k+2}}{\lambda_{k+1}} \big\} & \mbox{ if } k \ge 2.
\end{array}\right.
\end{equation}
Then the system is null-controllable at the time $T$. \newtext{Similarly, if in addition that  $m \ge k$ and $B \in {\cal B}_e$, then the system is exactly controllable at the time $T$.}
\end{theorem}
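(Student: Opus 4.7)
The plan is to adapt the backstepping argument from the proof of Theorem~\ref{thm1} to an enlarged spatial interval $[0,L]$, exploiting the analyticity of $C$. Because $\Sigma$ is constant and $C$ admits an analytic extension to $[0,L]$, the control system \eqref{Sys-1}--\eqref{bdry:x=0-w} can be viewed as the restriction to $[0,1]$ of a system of the same form on $[0,L]$, with boundary controls still acting at $x=1$ (not at $x=L$). The extra spatial room gained this way is what should permit the bad set of values of $\gamma$ appearing in Theorem~\ref{thm1} to be eliminated at the cost of a slightly larger control time.

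Following the backstepping paradigm, I would seek a Volterra-type transformation
\begin{equation*}
\tilde w(t,x) = w(t,x) - \int_0^x K(x,y)\, w(t,y)\, dy, \qquad 0 \le x \le L,
\end{equation*}
mapping solutions of the (extended) system into solutions of a target system $\partial_t \tilde w = \Sigma \partial_x \tilde w$ on $[0,L]$ with a boundary condition at $x=0$ of the same form as \eqref{bdry:x=0-w}. Substituting the ansatz and using \eqref{Sys-1}, the kernel $K$ must satisfy a first-order hyperbolic PDE system on the triangle $\mathcal{T}_L = \{(x,y)\colon 0\le y \le x \le L\}$, with a diagonal condition on $y=x$ arranged to cancel the $\gamma C(x)$ coupling, and reflection conditions on $y=0$ derived from \eqref{bdry:x=0-w}. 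The hypothesis $B_{k1}\ne 0$ combined with $B\in \mathcal B$ is precisely what makes these reflection laws invertible, so that the characteristic method can be applied.

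The central obstacle is the construction of $K$ on $\mathcal{T}_L$. One integrates along characteristics: trajectories enter the triangle from the diagonal, reflect off $y=0$ with a law dictated by $B$, and eventually leave the domain. A geometric calculation with the characteristic speeds $\lambda_i$ identifies $L = \rho_k/(\rho_k - 1)$ as the largest $x$-coordinate reached by such a trajectory issued from the diagonal near $x=1$ after one reflection at $y=0$, so that $\mathcal{T}_L$ is exactly large enough to contain the full reflection pattern. To close the iterative construction across reflection lines and the diagonal, I would invoke the analyticity of $C$ on $[0,L]$ via a Cauchy--Kovalevskaya / analytic fixed-point argument: analyticity is what allows consistent gluing of the kernel pieces across the characteristic reflections and, crucially, avoids the resonance-type obstruction that produces the discrete bad set of $\gamma$ in Theorem~\ref{thm1}. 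This is the step that genuinely requires analytic, not merely smooth, coefficients.

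Once $K$ is available on $\mathcal{T}_L$, the Volterra transformation is invertible, and null-controllability of the original system at any $T>T_{opt}$ reduces to that of the target system, which is obtained directly by propagation along characteristics on $[0,L]$, the extra time $T - T_{opt}$ absorbing the travel through the extension $[1,L]$. The exact controllability statement for $m\ge k$, $B\in \mathcal B_e$, follows from the same construction: the stronger invertibility condition in $\mathcal B_e$ (up to $i=k$ rather than $i=\min\{k,m-1\}$) supplies exactly the additional freedom needed to match an arbitrary prescribed final state, instead of only the zero state, at time $T$.
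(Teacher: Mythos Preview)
Your proposal has a genuine gap: the mechanism you describe for how analyticity removes the obstruction is not the one that actually works, and the one you sketch cannot be carried out.

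First, the kernel $K$ exists for every bounded $C$ (Lemma~\ref{lem2}); analyticity plays no role in its construction, and no Cauchy--Kovalevskaya argument is needed. More importantly, no choice of backstepping kernel yields a pure transport target $\partial_t\tilde w=\Sigma\partial_x\tilde w$: the values $K_{ij}(x,0)$ for $(i,j)\notin\mathcal J$ are forced by the flow from the diagonal, so the source term $S(x)u(t,0)$ in the target system \eqref{eq-u} is unavoidable. Enlarging the kernel domain to $\{0\le y\le x\le L\}$ does not change what $K(\cdot,0)$ must be on $[0,1]$, so it cannot make $S\equiv0$. Consequently, the reduction ``null-controllability of the target system follows by propagation along characteristics'' is not available, and your explanation that analyticity ``avoids the resonance-type obstruction'' at the level of the kernel construction is unfounded: Part~2) of Theorem~\ref{thm1} produces an explicit analytic $C$ for which controllability at $T_{opt}$ genuinely fails.

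What the paper actually does is quite different. The target system is still \eqref{eq-u}, and the obstruction is still encoded in a Fredholm equation $(I+\mathcal K_1)U=h$ for $u_{k+m}(\cdot,0)$ on an interval of length $\tau_{k-1}-\tau_k$ (see \eqref{eq-K1}); for the bad values of $\gamma$, $I+\mathcal K_1$ has nontrivial kernel. The enlarged triangle (with vertices $(0,0)$, $(1,0)$, $(L,L)$, not $\{0\le y\le x\le L\}$) is used, via Lemma~\ref{lem-A2}, only to show that the convolution kernels $f,g$ of $\mathcal K_1$ are analytic. Lemma~\ref{lem-A1} then gives a unique continuation property: any eigenfunction $V$ of $\mathcal K_1^*$ with eigenvalue $-1$ cannot vanish on an open subinterval. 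The extra time $T-T_{opt}$ is spent not ``travelling through $[1,L]$'' but adjusting $u_{k+1}(\cdot,1)$ on a short window so that the source $h$ becomes orthogonal to the finite-dimensional $\ker(I+\mathcal K_1^*)$; this is possible precisely because those eigenfunctions are nonzero on that window. The Fredholm alternative then delivers a solution. Your outline contains none of this preparation/Fredholm/unique-continuation structure, which is the heart of the proof.
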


Theorem~\ref{thm2} is proved in Section~\ref{sect-thm2}.

In the case $C \equiv 0$, we can prove  that $T_{opt}$ is the optimal time for the null-controllability of the considered system via a linear time independent feedback law. More precisely, we have

\begin{proposition} \label{pro-C} Assume that $C \equiv 0$ and \eqref{cond-B-1} holds for $1 \le i \le \min\{k, m - 1\} $.
There exists a {\bf linear time independent feedback} which yields the null-controllability at the time $T_{opt}$. Assume in addition that \eqref{cond-B-1} holds for $i =  \min\{k, m\}$,  then, for any $T < T_{opt}$, there exists an initial datum such that $u(T, \cdot) \not \equiv 0$ for every control.
\end{proposition}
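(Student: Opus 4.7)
Because $C\equiv 0$, each component of $w$ is transported along its characteristic, coupled only at $x=0$ via $B$. Writing $s_{k+j}(x) = \int_x^1 1/\lambda_{k+j}$ and $r_i(x) = \int_0^x 1/\lambda_i$, one has, once the travel time is exceeded,
\begin{equation*}
w_{k+j}(t,x) = W_{k+j}\bigl(t - s_{k+j}(x)\bigr),\qquad w_i(t,x) = \sum_{j=1}^m B_{ij}\, w_{k+j}\bigl(t - r_i(x), 0\bigr)\quad (i\le k),
\end{equation*}
together with $w_{k+j}(\sigma,0) = W_{k+j}(\sigma - \tau_{k+j})$ for $\sigma \ge \tau_{k+j}$, otherwise an initial-data expression. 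Null controllability at time $T$ is thus equivalent to the conditions $W_{k+j}(\sigma) = 0$ on $[T-\tau_{k+j},T]$ and $\sum_j B_{ij}\, w_{k+j}(\sigma,0) = 0$ on $[T-\tau_i,T]$ for every $i=1,\dots,k$.

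\noindent\textbf{Construction of the feedback.} Consider first $m\ge k$; the case $m<k$ is analogous using only the bottom $m$ rows of $B$. Pair row $i$ of the constraints with the control $W_{k+m-k+i}$; the corresponding cycle has length $\tau_i+\tau_{m+i}$, which is a term of \eqref{def-Top}. Resolve the constraints in cascade from $i=k$ upward: at step $s$ (for $s=1,\dots,\min\{k,m-1\}$) the invertibility of the bottom-right $s\times s$ submatrix of $B$ from \eqref{cond-B-1} allows one to solve uniquely for $W_{k+m-s+1}(\sigma)$ as a linear combination of already-assigned controls at earlier times. Each such formula, after using $w_{k+j}(\sigma,0) = W_{k+j}(\sigma-\tau_{k+j})$ (or the initial-data characteristic expression at earlier times), can be re-expressed as a linear functional of the current state $w(\sigma,\cdot)$ evaluated at finitely many interior points; this yields a linear, time-independent feedback. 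The unpaired controls (the first $m-k$ when $m>k$; $W_{k+1}$ alone when $m=k$) are set to zero. At $T = T_{opt}$ the remaining row-$1$ constraint is automatically satisfied because the relevant arguments $T_{opt} - r_1(x)$ all lie above $\tau_{k+1}$, where the positive-direction traces at $x=0$ already vanish, and one verifies null controllability at every $T \ge T_{opt}$.

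\noindent\textbf{Optimality.} Let $T<T_{opt}$; some term in \eqref{def-Top} exceeds $T$. If $T<\tau_{k+1}$, pick initial data with $w_{k+1}(0,\cdot)$ supported on an interval whose characteristic has not exited the domain by time $T$; then $w_{k+1}(T,\cdot)\ne 0$ independently of the control. Otherwise $T<\tau_{i^*}+\tau_{m+i^*}$ for some $i^*$, and setting $j^*=m+i^*-k$, one has $T-\tau_{i^*}<\tau_{m+i^*}\le\tau_{k+j}$ for every $j\le j^*$. Consequently, each term $B_{i^*j} w_{k+j}(T-\tau_{i^*},0)$ in
\begin{equation*}
w_{i^*}(T,1) = \sum_{j=1}^m B_{i^*j}\, w_{k+j}(T-\tau_{i^*},0)
\end{equation*}
is pure initial data for $j\le j^*$. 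Considering simultaneously the constraints $w_i(T,x)=0$ for $i=1,\dots,k$ and $x$ in a small interval near $1$ yields a finite-dimensional linear system whose ``uncontrollable'' initial-data block is governed by the bottom-right $\min\{k,m\}\times\min\{k,m\}$ submatrix of $B$. The additional invertibility hypothesis \eqref{cond-B-1} for $i=\min\{k,m\}$ makes this block invertible, so one can choose initial data in the uncontrollable directions producing a non-cancellable obstruction: some $w_i(T,x_0)\ne 0$ for every control.

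\noindent\textbf{Main difficulty.} The optimality argument is the main obstacle. One must track, for each positive direction $j$ and at times near $T-\tau_{i^*}$, whether it has transitioned from ``initial-data regime'' to ``control regime''; combine the constraints from all rows $i\le k$; and finally invoke the full invertibility of the $\min\{k,m\}\times\min\{k,m\}$ bottom-right block---exactly the added hypothesis---to exhibit initial data whose image in $w_{i^*}(T,\cdot)$ cannot be cancelled by any choice of control.
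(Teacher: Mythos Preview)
Your feedback construction follows essentially the same route as the paper: triangularize the boundary constraints by Gaussian elimination using \eqref{cond-B-1} for $i=1,\dots,\min\{k,m-1\}$, set the unpaired controls to zero, and realize each resulting relation as a feedback at finitely many interior points. The paper carries this out with slightly more care in verifying that the closed loop actually reaches zero at $T_{opt}$, but the idea is the same.

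Your optimality argument, however, has a genuine gap. You write that ``considering simultaneously the constraints $w_i(T,x)=0$ for $i=1,\dots,k$ and $x$ near $1$ yields a finite-dimensional linear system'' governed by the bottom-right $\min\{k,m\}\times\min\{k,m\}$ block of $B$. But the constraint for row $i$ lives on the time interval $[T-\tau_i,T]$, and these intervals are nested, not equal: at a fixed time $\sigma$ only the rows $i$ with $\tau_i \ge T-\sigma$ are active. There is no single time at which all $k$ constraints can be imposed simultaneously, so no $k\times k$ system arises.

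The paper proceeds differently. It fixes the index $i_0$ achieving $T_{opt}=\tau_{i_0}+\tau_{m+i_0}$ and works on the interval $(T-\tau_{i_0},\,T-\tau_{i_0}+\varepsilon)$. On that interval exactly the rows $i=i_0,\dots,k$ are active, giving $k-i_0+1$ constraints. Gaussian elimination on the corresponding $(k-i_0+1)\times(k-i_0+1)$ bottom-right block (this is where the full hypothesis \eqref{cond-B-1} for $i\le\min\{k,m\}$ is used) expresses $u_{m+i_0}(t,0)$ as $M_{i_0}\big(u_{k+1},\dots,u_{m+i_0-1}\big)^{\mathsf T}(t,0)$. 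The crucial observation you are missing is that every argument on the right-hand side is still in the initial-data regime, because $T-\tau_{i_0}+\varepsilon<\tau_{m+i_0}<\tau_{m+i_0-1}\le\tau_{k+j}$ for the relevant $j$. One then picks the explicit initial datum $u_{i_0+m}(0,\cdot)=1$, all other components zero: the right-hand side vanishes, forcing $u_{m+i_0}(t,0)=0$, while the characteristic method gives $u_{m+i_0}(t,0)=1$ on the same interval---a contradiction. Your sketch never isolates which rows are active nor exhibits this concrete datum, and the block you name is the wrong size when $i_0>1$.
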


Proposition~\ref{pro-C} is proved in   Section~\ref{sect-pro-C}.

We now briefly describe the method used in the proofs. Our approach relies on  backstepping due to Miroslav Krstic and his coauthors (see also Remark~\ref{rem-Ref}). \newtextb{More precisely, we make the following change of variables
$$
u(t, x) = w(t, x) - \int_0^x K(x, y) w(t, y) \, d y
$$
for some kernel $K: \D = \big\{(x, y) \in (0, 1)^2; 0  <  y <  x \big\}
 \to \mR^{n}$. The idea is to choose $K$ in such a way that the controllability of the target system of $u$ is easier to investigate. In our case, $K$ is chosen so that
\eqref{eq-u} holds with $K(x, 0)$ having appropriate properties, see in particular \eqref{def-Q} and  \eqref{def-S}.}

The use of backstepping method to obtain the null-controllability for hyperbolic systems in one dimension  was initiated  in \cite{Coron13} for the case $m = k = 1$.
This approach has been  developed later on for more general hyperbolic system in \cite{2015-Hu-Di-Meglio-Vazquez-Krstic-preprint, Auriol16, Coron17}. In \cite{Coron13}, the optimal time $T_{opt}$ is obtained for the case $m  = k = 1$. In \cite{2015-Hu-Di-Meglio-Vazquez-Krstic-preprint}, the authors considered the case where $\Sigma$ is constant. They  obtained the null-controllability for the time
\begin{equation}\label{def-T1}
T_1 : = \tau_k + \sum_{l = 1}^m \tau_{k + l}.
\end{equation}
 It was  later showed in \cite{Auriol16, Coron17} that one can reach the null-controllability at the time
\begin{equation}\label{def-T2}
T_2 := \tau_k + \tau_{k+1}.
\end{equation}
In \cite{2015-Hu-Di-Meglio-Vazquez-Krstic-preprint, Auriol16, Coron17}, one does not require any conditions on $B$ and the optimal time in this case is $T_2$.
With the convention \eqref{relation-lambda}, it is clear that
$$
T_{opt} \le  T_2 \le  T_1, 
$$
and
$$
T_2 < T_1 \mbox{ if } m > 1 \quad \mbox{ and } \quad T_{opt} < T_2 \mbox{ if } m > 1 \mbox{ or } k > 1.
$$
When $C \equiv 0$, Long Hu \cite{LongHu} established the  \newtext{exact} controllability for quasilinear systems, i.e.,  $A = A(u)$,  in the case $m \ge k$ for the time
$$
T_3 := \max\{\tau_{k+1}, \tau_{k} + \tau_{m+1}\}
$$
under a condition on $B$, \newtext{which is equivalent to \eqref{cond-B-1} with $i = k$ in our setting}.  It is clear in the case $m \ge k$ that
$$
T_{opt} \le T_3  \le T_2, \quad T_3 = T_{opt} \mbox{ if } k =1,  \quad \mbox{ and } \quad T_{opt} < T_3 \mbox{ if } k >  1.
$$

\newtext{In the linear case, the null controllability was established for the time $T_{2}$ without any assumption on $B$ and the exact controllability was obtained in the case $m = k$  under a condition which is different but has some similar features to condition \eqref{cond-B-1} with $i = k$ in \cite[Theorem 3.2]{Russell78}. In the quasilinear case with $m \ge k$,  the exact controllability was derived in   \cite[Theorem 3.2]{Li00} (see also \cite{LiRao02}) for $m \ge k$ and  for the time $T_2$ under a condition which is equivalent to \eqref{cond-B-1} with $i = k$ in our setting.}

Theorems~\ref{thm1} \newtext{and \ref{thm-exact}},  and Proposition~\ref{pro-C} confirm that generically the optimal time to reach the null-controllability for the system in \eqref{Sys-1}, \eqref{bdry:x=0-w}, and \eqref{control:x=1-w} is $T_{opt}$. Condition $B \in {\cal B}$ (resp. $B \in {\cal B}_e$) is very natural to obtain the null-controllability (\newtext{resp. exact-controllability}) at $T_{opt}$ (see Section~\ref{sect-thm1-part2} for details) which roughly speaking  allows to  use the $l$ controls  $W_{k+ m - l + 1}, \cdots, W_{k+m}$  to control $u_{k-l +1}, \cdots, u_{k}$ for $1 \le l \le \min\{k, m\}$ (the possibility to implement $l$ controls corresponding to the fastest  positive speeds to control  $l$ components corresponding to the lowest negative speeds).

In comparison with the previous works mentioned above, our analysis contains two new ingredients. First, after transforming  the system into a new one (target system) via backstepping method as usual,  we carefully choose the control varying with respect to time
 so that the zero state  is reachable at  $T_{opt}$;  in the previous works, the zero controls were used for the target system.  Secondly, the boundary conditions  of the kernel obtained from the backstepping approach given in this paper are different from  the known ones.  Our idea is to explore as much as possible the boundary conditions of the kernel to make the target system as simple as possible from the control point of view.

\begin{remark}\label{rem-Ref} \fontfamily{m} \selectfont
The backstepping method has been also used to stabilize the  wave equation  \cite{2008-Krstic-Guo-Balogh-Smyshlyaev, 09-krstic-smyshlyaev, 2010-Smyshlyaev-Cerpa-Krstic-SICON}, the parabolic equations in \cite{Smyshlyaev04,05-krstic-smyshlyaev},  nonlinear parabolic equations  \cite{Vazquez08}. The standard backstepping approach relies on the Volterra transform of the second kind. In some situations, more general transformations are  considered as for Korteweg-de Vries equations  \cite{13-Cerpa-Coron-TAC}, Kuramoto--Sivashinsky equations \cite{2015-Coron-Lu-JDE}, and Schr\"odinger's equation \cite{coron-gagnon16}. The use of backstepping method  to obtain the null-controllability of the heat equation is given in \cite{Coron-Nguyen}.  A concise introduction of this method applied to numerous partial differential equations can be found in \cite{Krstic08}.
\end{remark}

The paper is organized as follows. In Section~\ref{sect-backstepping}, we apply the backstepping approach to derive the target system and the equations for the kernel.  Section~\ref{sect-preliminaries} is devoted to some properties on the control systems and the kernel. The proofs of Theorems \ref{thm1} and \ref{thm2} are presented in
Sections~\ref{sect-thm1} and \ref{sect-thm2} respectively.   A null-controllability result which holds for all $\gamma$ and $B \in {\cal B}$ is given in Section~\ref{sect-FP}.
In Section~\ref{sect-pro-C}, we  present the proof of Proposition~\ref{pro-C}.

\medskip
\noindent{\bf Acknowledgement:} The authors are grateful to the Institute for Theoretical Studies, ETH Z\"{u}rich for the hospitality and the support. They are also partially supported by  ANR Finite4SoS ANR-15-CE23-0007. They thank Amaury Hayat and Long Hu for useful comments.

\section{A change of variables via backstepping approach. Systems of the kernel and the target}\label{sect-backstepping}


In what follows, we assume that $\gamma =1$, the general case can be obtained from this case by replacing $C$ by $\gamma C$. \newtextb{As in \cite[Section 3]{BC0}, \cite[Section 4]{Coron13}, and \cite[Section 3]{hu15}},  without loss of generality, one can assume  that  $C_{ii}(x)  = 0$ for $1 \le i \le n$. The key idea of backstepping approach is to make the following change of variables
\begin{equation}\label{backstepping}
u(t, x) = w(t, x) - \int_0^x K(x,y) w(t, y) \, dy,
\end{equation}
for some kernel $K: \D \to \mR^{n \times  n}$  which is chosen in such a way that the system for $u$ is easier to control. Here
\begin{equation}\label{def-D}
\D = \big\{(x, y) \in (0, 1)^2; 0  <  y <  x \big\}.
\end{equation}

To determine/derive the equations for  $K$, we first compute $\partial_t u (t, x) - \Sigma(x) \partial_x u(t, x)$. Taking into account \eqref{backstepping},
we formally have \footnote{We assume here that $u$, $w$, and $K$ are smooth enough so that the below computations make sense.}
\begin{align*}
\partial_t u (t, x) =&  \partial_t w(t, x) - \int_0^x K(x,y) \partial_t w(t, y) \, dy \\[6pt]
= &  \partial_t w(t, x) - \int_0^x \Big[K(x,y) \big( \Sigma(y) \partial_y w(t, y) + C(y) w(t, y) \big) \Big]\, dy  \quad (\mbox{by } \eqref{Sys-1})\\[6pt]
= &  \partial_t w(t, x) - K(x, x) \Sigma(x) w(t, x) + K(x, 0) \Sigma(0) w(t, 0) \\[6pt]
& + \int_0^x \Big[ \partial_y  \big(K(x,y)  \Sigma(y) \big) w(t, y) - K(x, y) C(y) w(t, y) \Big] \, dy \quad (\mbox{by integrating by parts})
\end{align*}
and
\begin{align*}
\partial_x u(t, x) = \partial_x w(t, x) - \int_0^x \partial_x K(x,y) w(t, y) \, dy - K(x,x) w(t, x).
\end{align*}
It follows from \eqref{Sys-1} that
\begin{multline}
\partial_t u (t, x) - \Sigma(x) \partial_x u(t, x) =  \Big( C(x) - K(x,x) \Sigma(x)  + \Sigma(x) K(x, x) \Big) w(t, x) + K(x, 0) \Sigma(0) u(t, 0) \\[6pt]
+ \int_0^x \Big[ \partial_y K(x, y) \Sigma (y) + K(x, y)  \Sigma'(y)  - K(x, y) C(y) + \Sigma(x) \partial_x K(x, y) \Big] w(t, y) \, dy.
\end{multline}

We search a  kernel $K$ which satisfies  the following two conditions
\begin{equation}\label{equation-K}
\partial_y K(x, y) \Sigma (y) +  \Sigma(x) \partial_x K(x, y) +  K(x, y)  \Sigma'(y) - K(x, y) C(y)= 0 \mbox{ in } \D
\end{equation}
and
\begin{equation}\label{bdry1-K}
{\cal C}(x): = C(x) - K(x,x) \Sigma(x)  + \Sigma(x) K(x, x) = 0 \mbox{ for } x \in (0, 1),
\end{equation}
so that one formally has
\begin{equation}\label{eq-u}
\partial_t u (t, x) =  \Sigma(x) \partial_x u (t, x) + K(x, 0) \Sigma(0)  u(t, 0)  \mbox{ for } (t, x)  \in \mR_+ \times (0, 1).
\end{equation}
In fact, such a $K$ exists so that \eqref{eq-u} holds (see Proposition~\ref{pro-eq-u}).  We have
\begin{multline}\label{eq-Kij}\mbox{the $(i, j)$ component of the matrix $\partial_y K (x, y) \Sigma(y) +\Sigma(x) \partial_x K(x, y)$ is} \\[6pt]
a_{ij} (y) \partial_{y} K_{ij}(x, y) + b_{ij} (x)\partial_x K_{ij}(x, y),
\end{multline}
where
\begin{equation}\label{def-ab}
\big(a_{ij} (y), b_{ij} (x) \big)  = \left\{ \begin{array}{cl}
\big(-\lambda_j (y), - \lambda_i (x) \big)   & \mbox{ if } 1 \le i, j \le k, \\[6pt]
\big(\lambda_{j} (y),  -\lambda_i(x)\big)  & \mbox{ if } 1 \le i \le k < k+1 \le  j \le k + m, \\[6pt]
\big(\lambda_{j} (y),  \lambda_i (x)\big)  & \mbox{ if } k+1 \le i, j \le k + m, \\[6pt]
\big(-\lambda_j (y),  \lambda_i (x)\big)  & \mbox{ if } 1 \le j \le k < k+1 \le  i \le k + m.
\end{array}\right.
\end{equation}

We  denote
\begin{equation*}
\Gamma_1 = \big\{ (x, x); x \in (0, 1)  \big\}, \quad \Gamma_2 =  \big\{(x, 0); x \in (0, 1)  \big\}, \quad \mbox{ and } \quad  \Gamma_3 =  \big\{(1, y); y \in (0, 1)  \big\}.
\end{equation*}

\begin{remark}\rm
By the characteristic method, it is possible to impose the following boundary conditions for $K_{ij}$ when $\Sigma $ is constant:
\begin{itemize}
\item On  $\Gamma_1$ if $a_{ij}/b_{ij}\le 0$, see case $a)$ in Figure \ref{figure-2}.

\item On both $\Gamma_1$ and $\Gamma_2$ if $0 < a_{ij}/b_{ij} < 1$,  see case $b)$ in Figure \ref{figure-2}.

\item On $\Gamma_1$ and $\Gamma_3$ if  $ a_{ij}/b_{ij}> 1$, see case $c)$ in Figure \ref{figure-2}.

\item On $\Gamma_2$  if  $a_{ij}/b_{ij} = 1$,  see case $d)$ in Figure \ref{figure-2}.

\end{itemize}
\end{remark}

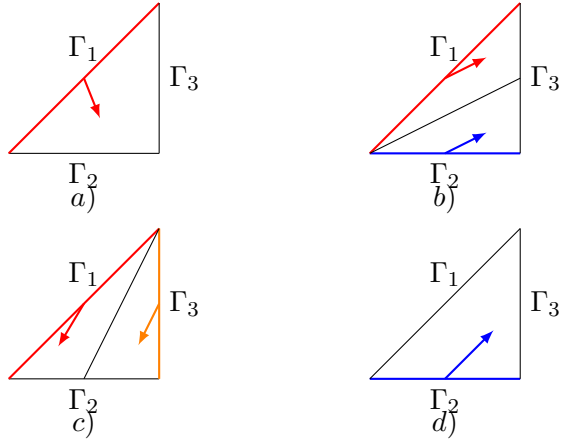
\begin{figure}
\centering
\begin{tikzpicture}[scale=2]


\draw[] (0,0) -- (1,0);
\draw[red, thick] (0,0) -- (1,1);
\draw[] (1,0) -- (1,1);

\draw (0.5, 0.55) node[above]{$\Gamma_1$};
\draw (0.5, 0.0) node[below]{$\Gamma_2$};
\draw (1.0, 0.5) node[right]{$\Gamma_3$};

\draw [-latex, red, thick, shorten >= 0.50cm] ((0.5, 0.5) --(0.7,0);

\draw (0.5, -0.3) node{$a)$};


\newcommand\x{2.4}

\newcommand\y{-1.5}

\draw[blue, thick] (0+\x,0) -- (1+\x,0);
\draw[red, thick] (0+\x,0) -- (1+\x,1);
\draw[] (1+\x,0) -- (1 + \x,1);
\draw[] (0 + \x,0) -- (1 + \x, 0.5);

\draw (0.5 +\x, 0.55) node[above]{$\Gamma_1$};
\draw (0.5 +\x, 0.0) node[below]{$\Gamma_2$};
\draw (1.0 +\x, 0.5) node[right]{$\Gamma_3$};

\draw [-latex, blue, thick, shorten >= 0.5cm] ((0.5 +\x, 0.0) --(1+\x,0.25);
\draw [-latex, red, thick, shorten >= 0.5cm] ((0.5 +\x, 0.5) --(1+\x, 0.75);

\draw (0.5 + \x, -0.3) node{$b)$};


\draw[] (0,0 + \y) -- (1,0+ \y);
\draw[red, thick] (0,0+ \y) -- (1,1+ \y);
\draw[orange, thick] (1,0+ \y) -- (1,1+ \y);
\draw[] (1,1+ \y) -- (0.5,0+ \y);

\draw (0.5, 0.55 + \y) node[above]{$\Gamma_1$};
\draw (0.5, 0.0 + \y) node[below]{$\Gamma_2$};
\draw (1.0, 0.5 + \y) node[right]{$\Gamma_3$};

\draw [-latex, orange, thick, shorten >= 0.5cm] ((1, 0.5+ \y) --(0.75,0+ \y);
\draw [-latex, red, thick, shorten >= 0.5cm] ((0.5, 0.5 + \y) --(0.2, 0+ \y);

\draw (0.5, -0.3 + \y) node{$c)$};


\draw[blue, thick] (0 + \x,0 + \y) -- (1 + \x,0 + \y);
\draw[] (0 + \x,0 + \y) -- (1+ \x,1+ \y);
\draw[] (1 + \x,0 + \y) -- (1+ \x,1+ \y);

\draw (0.5+ \x, 0.55+ \y) node[above]{$\Gamma_1$};
\draw (0.5+ \x, 0.0+ \y) node[below]{$\Gamma_2$};
\draw (1.0 + \x, 0.5+ \y) node[right]{$\Gamma_3$};

\draw [-latex, blue, thick, shorten >= 0.5cm] ((0.5+ \x, 0+ \y) --(1+ \x,0.5+ \y);
\draw (0.5 + \x, -0.3 + \y) node{$d)$};

\end{tikzpicture}
\caption{The characteristic vectors of $K_{ij}$ in the case $\Sigma$ is constant: a) in the case  $a_{ij}/b_{ij} < 0$ ($1 \le i \le k< k+1 \le j \le k+ m$ or $1 \le j \le k < k+1 \le i \le k+ m$), b) in the case $0 < a_{ij}/b_{ij} < 1$ ($1 \le i < j \le k$ or $k+1 \le j < i \le k+ m$), c) in the case $a_{ij}/b_{ij} > 1$ ($1 \le j < i \le k$ or $k + 1 \le i < j \le k+ m$), and d) in the case $a_{ij}/b_{ij} = 1$ ($1 \le i = j \le k+m$).}\label{figure-2}
\end{figure}

To impose (appropriate) boundary conditions of $K$ on $\Gamma_2$ so that the system for $u$ is  simple, we investigate the term $K(x, 0) \Sigma(0) u(t, 0)$. Set
\begin{equation}\label{def-Q}
Q := \left(\begin{array}{cccc} 0_k & B \\[6pt]
0_{m, k} & I_m
\end{array}\right).
\end{equation}
Here and in what follows, $0_{i, j}$  denotes the zero matrix  of size $i \times j$, and $0_i$ and $I_i$ denotes the zero matrix and the identity matrix of the size $i \times i$ for $i, \, j \in \mN$. Using the boundary conditions at $x= 0$ in \eqref{bdry:x=0-w} and the fact that $u(t, 0) = w(t, 0)$, we obtain
$$
u(t, 0) = Q u(t, 0).
$$
It follows that
$$
K(x, 0) \Sigma(0) u(t, 0) = K(x, 0) \Sigma(0) Q u(t, 0).
$$
We have, by the definition of $Q$ in  \eqref{def-Q},
$$
\Sigma (0) Q = \left(\begin{array}{cccc} 0_k & \Sigma_-(0) B \\[6pt]
0_{m, k} & \Sigma_+(0)
\end{array}\right).
$$
Here and in what follows,  we define, for $x \in [0, 1]$,
$$
\mbox{$\Sigma_- (x) := \mbox{diag } \big(-\lambda_1(x), \dots, -\lambda_k(x) \big)$ and  $\Sigma_+ (x) : = \mbox{diag }\big(\lambda_{k+1}(x), \dots, \lambda_{k+m}(x) \big)$.}
$$
Denote
$$
K(x, 0) =  \left(\begin{array}{cccc} K_{--}(x) & K_{-+}(x)  \\[6pt]
K_{+-}(x) & K_{++}(x)
\end{array}\right),
$$
where $K_{--}$, $K_{-+}$, $K_{+-}$, and $K_{++}$ are matrices of size $k \times k$, $k \times m$, $m \times k$, and $m \times m$, respectively.
Set
\begin{equation}\label{def-S-*}
S(x): = K(x, 0) \Sigma (0) Q.
\end{equation}
We have
\begin{equation}\label{def-S}
S(x) =  \left(\begin{array}{cccc} 0_k & K_{--}(x) \Sigma_-(0) B + K_{-+} (x)\Sigma_+ (0)\\[6pt]
0_{m, k} & K_{+-} (x)\Sigma_-(0) B + K_{++} (x) \Sigma_+(0)\end{array}\right)  =  \left(\begin{array}{cccc} 0_k & S_{-+} (x)  \\[6pt]
0_{m, k} & S_{++}(x)
\end{array}\right).
\end{equation}

We impose boundary conditions for $K_{ij}$ on $\Gamma_1$, $\Gamma_2$, and $\Gamma_3$ as follows:

\begin{itemize}

\item[$BC_1)$] For $(i, j)$ with $1 \le i \neq j \le k + m$, we impose the boundary condition for $K_{ij}$ on $\Gamma_1$ in such a way that ${\cal  C}_{ij}(x) = 0$ (recall that ${\cal C}$ is define in \eqref{bdry1-K}). More precisely, we have, noting that $a_{ij} \neq b_{ij}$,
\begin{equation}\label{bdry-K-1}
K_{ij} (x,x)= C_{ij}(x) / \big(a_{ij}(x) - b_{ij}(x) \big) \mbox{ for } x \in (0, 1).
\end{equation}

\item[$BC_2)$] Set
\begin{equation*}
{\cal J} = \big\{(i, j); \; 1 \le i \le j \le k \mbox{ or } k+1 \le j \le i \le k + m \big\},
\end{equation*}
Note that if \big($i \neq j$ and $(i, j) \in {\cal J}$\big)  then $0< a_{ij}(0)/ b_{i, j}(0) < 1$ and the characteristic trajectory passing $(0,0)$ is inside $\D$ as in  case b) in Figure~\ref{figure-2}.   Using \eqref{eq-Kij} and \eqref{def-ab},   we can impose the boundary condition of $K_{ij}$ on $\Gamma_2$ with $(i, j) \in {\cal  J}$ in such a way that, for $x \in (0, 1)$,
\begin{equation}\label{cond-S-0}
K_{ij}(x, 0) = 0 \mbox{ for } 1 \le i \le j \le k
\end{equation}
and
\begin{equation}\label{cond-S}
\quad (S_{++})_{pq}(x) = 0 \mbox{ for } 1 \le q \le p \le m.
\end{equation}
These imposed conditions can be written under the form, for $(i, j) \in {\cal J}$,
\begin{equation}\label{bdry-K-2}
K_{i j}(x, 0) = \sum_{(r, s) \not \in {\cal J}} c_{i j r s}(B)K_{r s}(x, 0) \mbox{ for } x \in (0, 1),
\end{equation}
for some $c_{i j r s}(B)$ which is linear with respect to  $B$. \newtextb{Indeed, \eqref{cond-S-0} can be written under the form of \eqref{bdry-K-2} with $c_{i j r s} =0$ and for $1 \le q \le p \le m$, $K_{p,q}$ can be written under the form of \eqref{bdry-K-2} since the $(p,q)$ component of  $S_{++} = K_{+-} (x)\Sigma_-(0) B + K_{++} (x) \Sigma_+(0)$ is 0.}

\item[$BC_3)$] For $(i, j)$ with either $1 \le j < i \le k$ or $k+1 \le i < j \le k+ m$, we impose the zero boundary condition of $K_{ij}$ on $\Gamma_3$, i.e.,
\begin{equation}\label{bdry-K-3}
K_{ij}(1, y) = 0 \mbox{ for } y \in (0, 1).
\end{equation}
(Note that in this case $a_{ij}(1)/ b_{ij}(1) > 1$ and hence the characteristic trajectory passing $(1,1)$ is in $\D$ as in case  $c)$ in Figure~\ref{figure-2}).
\end{itemize}

Below are  the form  of $S (= S^{k, m})$ when $BC_2)$ is taken into account for some pairs $(k, m)$:
\begin{equation}\label{form-S-1}
 S^{2, 3}(x) = \left(\begin{array}{cccccc}
 0 & 0 & * & * & * \\[6pt]
 0 & 0 & * & * & * \\[6pt]
 0 & 0 & 0 & * & * \\[6pt]
 0 & 0 & 0 & 0 & * \\[6pt]
 0 & 0 & 0 & 0 & 0
\end{array}\right), \quad \mbox{ and } \quad
 S^{3, 2}(x) = \left(\begin{array}{cccccc}
 0 & 0 & 0 & * & * \\[6pt]
 0 & 0 & 0 & * & * \\[6pt]
 0 & 0 & 0 & * & * \\[6pt]
 0 & 0 & 0 & 0 & * \\[6pt]
 0 & 0 & 0 & 0 & 0
\end{array}\right).
\end{equation}
\newtextb{Here and in what follows, in a matrix, $*$ means that this part of that matrix can be whatever.}


\begin{remark} \label{rem-choice} \rm We here impose  \eqref{cond-S-0}  on $\Gamma_1$  and \eqref{bdry-K-3} on $\Gamma_3$. These choices are just for the simplicity of presentation. We later modify these in the proof of Theorem~\ref{thm2}.
\end{remark}

\section{Properties of the control systems and the kernel} \label{sect-preliminaries}

In this section, we establish the well-posedness of $u$, $w$, and $K$ and the unique determination of $w$ from $u$.  For notational ease, we assume that $\gamma =1$ (except in Lemma~\ref{lem2} and its proof), the general case follows easily.
We first investigate the well-posedness of $w$ and $u$ under the boundary conditions and the controls considered. We consider a more general control system, for $T> 0$,
\begin{equation}\label{sys-v}
\left\{\begin{array}{cl}
\partial_t v(t,x) = \dsp \Sigma(x) \partial_x v(t, x) + C(x) v(t, x) + D(x) v(t, 0) + f(t, x) & \mbox{ for } (t, x) \in (0, T) \times (0, 1),  \\[6pt]
v_- (t, 0) = \dsp B v_+(t, 0) + g(t) &  \mbox{ for }  t \in (0, T), \\[6pt]
v_+(t, 1) = \dsp \sum_{r=1}^R A_r (t) v\big(t, x_r \big) +   \int_{0}^1 M(t, y) v(t, y) \, dy + h(t) &  \mbox{ for }  t \in (0, T), \\[6pt]
v(t = 0, x) = v_0(x) &  \mbox{ for } x \in (0, 1),
\end{array}\right.
\end{equation}
where  $v_- = (v_1, \cdots, v_k)\tr $ and $v_+ =  (v_{k+1}, \cdots, v_{k+m})\tr  $.  Here $R\in \mN$,  $C, D: [0, 1] \to \mR^{n \times n}$, $A_r: [0, T] \to \mR^{m \times n}$, $x_r \in [0, 1]$ $(1 \le r \le R)$, $M: [0, T] \times [0, 1] \to \mR^{n \times n}$, $f \in \big[L^\infty \big((0, T) \times (0, 1) \big) \big]^n$, $g \in [L^\infty(0, T)]^k$, and $h \in [L^\infty(0, T)]^m$.
We make the following assumptions for this system
\begin{equation}\label{assumption-1}
x_r < c < 1 \mbox{ for some constant $c$},
\end{equation}
\begin{equation}\label{assumption-2}
C, D \in [L^\infty (0, 1)]^{n\times n}, \quad  A_r \in [L^\infty  (0, T)]^{n\times n},  \quad \mbox{ and } \quad
M \in [L^\infty \big((0, T) \times (0, 1) \big)]^{n\times n}.
\end{equation}
We are interested in bounded broad solutions of \eqref{sys-v} whose definition is as follows.
Extend $\lambda_i$ in $\mR$ by $\lambda_i(0)$ for $x< 0$ and $\lambda_i(1)$ for $x \ge 1$.  For $(s, \xi) \in [0, T] \times [0, 1]$, define $x_i(t, s, \xi)$ for $t \in \mR$ by
\begin{equation}\label{def-xi-1}
\frac{d}{d t} x_i(t, s, \xi) = \lambda_i \big(x_i(t, s, \xi) \big) \mbox{ and }  x_i(s, s, \xi) = \xi \mbox{ if } 1 \le i \le k,
\end{equation}
and
\begin{equation}\label{def-xi-2}
\frac{d}{d t} x_i(t, s, \xi) = - \lambda_i \big(x_i(t, s, \xi) \big) \mbox{ and }  x_i(s, s, \xi) = \xi \mbox{ if } k+1 \le i \le k+ m.
\end{equation}

\medskip
The following definition of broad solutions for \eqref{sys-v} is used in this paper

\begin{definition} \label{def-broad}A function $v=(v_1,\ldots, v_{k+m}): (0, T) \times (0, 1) \to \mathbb{R}^{k+m}$  is called a broad solution  of \eqref{sys-v} if $v\in [L^\infty \big((0, T) \times (0, 1) \big)]^{k+m} \cap [C\big([0, T]; L^2(0, 1) \big)]^{k+m} \cap [C\big([0, 1]; L^2(0, T)\big)]^{k+m}$ and if, for almost every $(\tau, \xi) \in (0, T) \times (0, 1)$, we have

\medskip
1. For $k+1 \le i \le k+ m$,
\begin{align}\label{broad-1}
v_i(\tau, \xi) = & \int_t^\tau \sum_{j=1}^n \Big( C_{i j} \big(x_i(s, \tau, \xi) \big) v_j \big(s, x_i(s, \tau, \xi)\big) + D_{i j}\big(x_i(s, \tau, \xi) \big) v_j(s, 0)  +  f_i\big(s, x_i(s, \tau, \xi) \big) \Big) \, ds \nonumber \\[6pt]
&  + \sum_{r=1}^R \sum_{j =1}^n A_{r, ij}(t) v_j\big(t, x_r  \big) + \int_{0}^1 \sum_{j =1}^n M_{ij}(t, x) v_j(t, x) \, dx + h(t),
\end{align}
if  $x_i(0, \tau, \xi) > 1$ and $t$ is such that $x_i(t, \tau, \xi) = 1$, and
\begin{align}\label{broad-2}
v_i(\tau, \xi) = &  \int_0^\tau \sum_{j=1}^n \Big(  C_{ij} \big(x_i(s, \tau, \xi)\big) v_j\big(s, x_i(s, \tau, \xi)\big) + D_{i j}\big(x_i(s, \tau, \xi)\big) v_j(s, 0) \nonumber \\[6pt]
& +  f_i\big(s, x_i(s, \tau, \xi) \big) \Big) \, ds   + v_{0, i} \big(x_i(0, \tau, \xi) \big),  \end{align}
if $x_i(0, \tau, \xi) < 1$.

\medskip
2. For $1 \le i \le k$,
\begin{align}\label{broad-3}
v_i(\tau, \xi) = & \int_t^\tau \sum_{j=1}^n \Big( C_{ij} (x_i(s, \tau, \xi)) v_j(s, x_i(s, \tau, \xi)) +   D_{ij}(x_i(s, \tau, \xi)) v_j(s, 0)  \nonumber \\[6pt] & \qquad +  f_i\big(s, x_i(s, \tau, \xi) \big) \Big)\, ds
 +  \sum_{j=1}^{m} B_{ij} v_{j+k}(t, 0) + g_i(t),
\end{align}
if $x_i(0, \tau, \xi) < 0$ and $t$ is such that $ x_i(t, \tau, \xi) = 0$ where $v_{j+k}(t, 0)$ is defined  by the RHS of \eqref{broad-1} or \eqref{broad-2} with $(\tau, \xi) = (t, 0)$, and
\begin{align}\label{broad-4}
v_i(\tau, \xi) = & \int_0^\tau \sum_{j=1}^n \Big(  C_{ij} (x_i(s, \tau, \xi)) v_j(s, x_i(s, \tau, \xi)) + D_{ij}(x_i(s, \tau, \xi)) v_j(s, 0)  \nonumber \\[6pt] & \qquad   +  f_i\big(s, x_i(s, \tau, \xi) \big) \Big) \, ds   + v_{0, i} \big(x_i(0, \tau, \xi) \big),
\end{align}
if $x_i(0, \tau, \xi) >0$.
\end{definition}

Here and in what follows,  $v_i$ denotes the $i$-th component of $v$, $v_{i, 0}$ denotes the $i$-th component of   $v_0$, and $A_{r,ij}$ denotes the $(i, j)$ component of $A_r$.

\newtextb{Classical solutions are smooth broad solutions. Conversely, smooth broad solutions are classical solutions. This is a consequence of the following lemma on the  well-posedness of \eqref{sys-v}:}

\begin{lemma} \label{lem-WP} Let $v_0 \in [L^\infty (0, 1)]^n$, $f \in \big[L^\infty \big((0, T) \times (0, 1) \big) \big]^n$, $g \in [L^\infty(0, T)]^k$, and \\
$h \in [L^\infty(0, T)]^m$,  and assume \eqref{assumption-1} and \eqref{assumption-2}. Then \eqref{sys-v} has a unique broad solution $v$.
\end{lemma}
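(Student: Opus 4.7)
The plan is to establish existence and uniqueness by a Banach fixed-point argument applied to the operator $\mathcal{F}$ defined by the right-hand sides of \eqref{broad-1}--\eqref{broad-4}, acting on the Banach space $X := [L^\infty((0, T) \times (0, 1))]^n$. Since $\mathcal{F}$ is affine in its argument and linear in the data $(v_0, f, g, h)$, it suffices to show that some iterate $\mathcal{F}^N$ is a contraction; the fixed point is then the unique broad solution.

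The principal difficulty is the nonlocal term $\sum_r A_r(t) v(t, x_r) + \int_0^1 M(t, y) v(t, y)\, dy$ appearing in the boundary condition at $x = 1$, which couples $v_+$ on the boundary to $v$ throughout $(0, 1)$ at the same time. To resolve this I would exploit a Volterra (causal) structure inherent in the system. For the point evaluations $v(t, x_r)$, assumption \eqref{assumption-1} gives a uniform positive delay $\delta := (1 - c)/\max_i \|\lambda_i\|_{L^\infty(0,1)} > 0$ between the current time $\tau$ and the first time at which $v_+(\cdot, 1)$ is implicitly evaluated along the backward characteristic emanating from $(t, x_r)$ and reaching $x = 1$. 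For the integrand $v(t, y)$, the change of variable $s = t - (1-y)/\lambda_i$ rewrites the portion of $v_+(t, y)$ inherited from the boundary $x = 1$ as a Volterra integral $\int_0^t \tilde k(t, s)\, v_+(s, 1)\, ds$ with bounded kernel, while the remaining portion of $v_+(t, y)$ originates from initial data (and is absorbed into the affine part). After this reorganization, every evaluation of $v$ on the right-hand side of $\mathcal{F}$ occurs at a time no later than the current one, with pointwise-delay contributions delayed by at least $\delta$.

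Iterating $\mathcal{F}$ sufficiently many times (of order $T/\delta$) forces the pointwise-delay contributions to evaluate $v$ at strictly negative times, where they reduce to initial data and are absorbed into the affine part. The remaining purely Volterra-integral contributions in $\mathcal{F}^N(v_1) - \mathcal{F}^N(v_2)$ obey the standard nested-integration bound, which decays to zero as $N \to \infty$; hence $\mathcal{F}^N$ is a contraction for $N$ large, and the extended Banach fixed-point theorem yields a unique fixed point $v \in X$. The regularities $v \in [C([0, T]; L^2(0, 1))]^n$ and $v \in [C([0, 1]; L^2(0, T))]^n$ then follow from \eqref{broad-1}--\eqref{broad-4} by dominated convergence, using the $L^\infty$ bounds on $v$ and on the data together with the Lipschitz continuity \eqref{cond-lambda} of the $\lambda_i$ which ensures continuous dependence of the characteristic flows $x_i(\cdot, s, \xi)$ on their endpoints.

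The main obstacle is producing a clean causal decomposition of $\mathcal{F}$ that correctly accounts for the reinjection of $v_-$ into $v_+$ through the boundary condition $v_-(\cdot, 0) = B v_+(\cdot, 0) + g$ at $x = 0$, creating nested delay chains that mix the $v_-$ and $v_+$ characteristics. This is essentially a bookkeeping exercise: each additional bounce between $x = 0$ and $x = 1$ contributes at least one more delay of order $1/\max_i \lambda_i$, so only finitely many bounces can occur within $[0, T]$, and the total number of delay chains after $N$ iterations remains under control.
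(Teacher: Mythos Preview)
Your strategy is sound and would ultimately succeed, but it differs from the paper's and is considerably more laborious. The paper also runs a Banach fixed-point argument for the same operator $\mathcal{F}$, but instead of iterating it obtains a contraction \emph{directly} by equipping the solution space with the weighted sup-norm
\[
\|v\| := \sup_{1\le i\le n}\ \operatorname*{ess\,sup}_{(\tau,\xi)\in(0,T)\times(0,1)} e^{-L_1\tau - L_2\xi}\,|v_i(\tau,\xi)|,
\]
with $L_2$ large and $L_1$ much larger than $L_2$. The time weight $e^{-L_1\tau}$ handles all the Volterra-in-time integrals (including the one you extract from $\int_0^1 M\,v$), while the space weight $e^{-L_2\xi}$ is what tames the nonlocal point evaluations $v(t,x_r)$: since the characteristic from $(\tau,\xi)$ must reach $x=1$ before it can feel $v(t,x_r)$, one has $\tau-t\ge C(1-\xi)$, so the factor $e^{-L_1(\tau-t)}e^{L_2(c-\xi)}$ is small when $L_1\gg L_2$ and $c<1$. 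This replaces your entire causal-decomposition and delay-chain bookkeeping by three short estimates, one for each of \eqref{broad-1}, \eqref{broad-2}/\eqref{broad-4}, and \eqref{broad-3}.

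One technical point you should address: you set up the fixed point in $X=[L^\infty((0,T)\times(0,1))]^n$ alone, but then $v(\cdot,x_r)$ is not defined for a generic representative of $v$, so $\mathcal{F}$ is not even well defined on $X$. The paper avoids this by working from the outset in $\mathcal{Y}=X\cap [C([0,T];L^2(0,1))]^n\cap [C([0,1];L^2(0,T))]^n$, where the trace $v(\cdot,x_r)$ makes sense; you would need to do the same (and check that $\mathcal{F}$ preserves $\mathcal{Y}$) rather than recover the continuity only a posteriori.
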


\begin{proof} The proof is based on a fixed point argument.
To this end, define ${\cal F}$ from  ${\cal Y}: = \big[L^\infty\big((0, T) \times (0, 1) \big)\big]^n \cap \big[C\big([0, T]; L^2(0, 1)\big)\big]^n \cap \big[C\big([0, 1]; L^2(0, T) \big)\big]^n$ into itself
as follows, for $v \in {\cal Y} $ and for $(\tau, \xi) \in (0, T) \times (0, 1)$,
\begin{multline}
\big( {\cal F}(v) \big)_i(\tau, \xi) \mbox{ is the RHS of \eqref{broad-1} or  \eqref{broad-2} or  \eqref{broad-3} or  \eqref{broad-4}} \\
\mbox{ under the corresponding conditions.}
\end{multline}
Set
$$
{\cal N} : = \|B \|_{L^\infty} + \|C \|_{L^\infty} + \| D\|_{L^\infty} + \| M\|_{L^\infty} + \sum_{r=1}^R \| A_{r}\|_{L^\infty}.
$$
We claim that there exist two constants $L_1, \, L_2 > 1$ depending only on $c$, ${\cal N}$, and $\Sigma$ such that  ${\cal F}$ is a contraction map for  the norm
\begin{equation}\label{def-norm}
\| v \| := \sup_{1 \le i \le n} \mbox{ ess sup }_{(\tau, \xi) \in (0, T) \times (0, 1)} e^{-L_1 \tau - L_2 \xi} |v_i(\tau, \xi)|.
\end{equation}

We first consider the case where $\big({\cal F}(v) \big)_i(\tau, \xi)$ is given by the RHS of \eqref{broad-2} or \eqref{broad-4}. We claim that, for $v, \hat v \in {\cal Y}$,
\begin{equation}\label{board-part1}
e^{-L_1 \tau - L_2 \xi} \big| \big({\cal F}(v) \big)_i(\tau, \xi) - \big({\cal F}( \hat v)\big)_i (\tau, \xi) \big| \le \|v - \hat v \|/(10n),
\end{equation}
if  $L_2$ is large enough and  $L_1$ is much larger than $L_2$. Indeed, we have, with $V = v - \hat v$,
\begin{align*}
\big| \big({\cal F}(v) \big)_i(\tau, \xi) - \big({\cal F}( \hat v)\big)_i (\tau, \xi) \big| \le &  {\cal N}  \int_0^\tau  \big(  |V(s, x_i(s, \tau, \xi))| + |V(s, 0)| \big) \, ds \\[6pt]
\le &  2\sqrt{n}  {\cal N}L_1^{-1} \| V \| e^{\tau L_1 + L_2},
\end{align*}
which implies \eqref{board-part1}.

We next consider the  case where $\big({\cal F}(v) \big)_i(\tau, \xi)$ is given by the RHS of \eqref{broad-1}. We have
\begin{align*}
\big|\big({\cal F}(v) - {\cal F}(\hat v)   \big)_i(\tau, \xi) \big| \le  & {\cal N} \left( \int_t^\tau \big(|V(s, x_i(s, \tau, \xi))| + |V(s, 0)| \big) \, ds +  |V(t, x_r)|  + \int_{0}^1   |V(t, x)| \, dx \right) \\[6pt]
\le &  2 \sqrt{n} {\cal N}   \big( L_1^{-1} e^{L_1 \tau + L_2} \| V\|   +  e^{L_1 t + L_2 c} \| V \| +  L_2^{-1} e^{L_1 t + L_2} \|V \| \big).
 \\[6pt] \le &    4 \sqrt{n} {\cal N} \big( L_1^{-1} e^{L_1 \tau + L_2} \| V\|   +  L_2^{-1} e^{L_1 t + L_2} \| V \| \big),
\end{align*}
if $L_2$ is large enough since $c  < 1$.  Since $\tau - t \ge C (1 - \xi)$  for some positive constant depending only on $\Sigma$, $k$, and $m$ by the definitions of $x_i$ and $t$, it follows that
\begin{equation}\label{board-part2}
e^{-L_1 \tau - L_2 \xi} \big| \big({\cal F}(v) - {\cal F} (\hat v) \big)_i(\tau, \xi) \big| \le \|V \|/2,
\end{equation}
if $L_2$ is large and $L_1$ is much larger than $L_2$.


We finally consider the  case where $\big({\cal F}(v) \big)_i(\tau, \xi)$ is given by the RHS of \eqref{broad-3}. We have
\begin{align}\label{board-part3-1}
\big| \big({\cal F}(v) - {\cal F}(\hat v)  \big)_i(\tau, \xi)| & \le   {\cal N}   \left( \int_t^\tau \big( |V(s, x_i(s, \tau, \xi))| +   |V(s, 0)| \big)\, ds  +   \sum_{j=k+1}^{k+m} | V_{j}(t, 0)|  \right)  \nonumber \\[6pt]
& \le  2 \sqrt{n}{\cal N}   \Big( L_1^{-1} e^{L_1 \tau + L_2} \| V \|  +   \sum_{j=k+1}^{k+m} | V_{j}(t, 0)|  \Big).
\end{align}
From \eqref{broad-1} and \eqref{broad-2}, as in the previous cases, we have
\begin{equation*}
{\cal N} e^{L_2}e^{-L_1 t} | V_{j}(t, 0)| \le \|V \| / (10n) \quad \mbox{ for } k+1 \le j \le k+m,
\end{equation*}
if $L_2$ is large and $L_1$ is much larger than $L_2$.  We  derive from \eqref{board-part3-1} that
\begin{equation}\label{board-part3}
e^{-L_1 \tau - L_2 \xi}\big| \big({\cal F}(v) \big)_i(\tau, \xi) - \big({\cal F}(\hat v)\big)_i (\tau, \xi) \big| \le \|V\|/2,
\end{equation}
if $L_2$ is large enough and $L_1$ is much larger than $L_2$.

Combining \eqref{board-part1}, \eqref{board-part2}, and \eqref{board-part3} yields, for $v, \hat v \in {\cal Y}$,
\begin{equation*}
\|{\cal F}(v) - {\cal F}(\hat v) \| \le \|v - \hat v \|/ 2.
\end{equation*}
Thus ${\cal F}$ is a contraction mapping. By the Banach fixed-point theorem, there exists a unique $v \in {\cal Y}$ such that
$$
{\cal F}(v) = v.
$$
The proof is complete.
\end{proof}

Concerning $K$, we have the following result:

\begin{lemma} \label{lem2}  Assume \eqref{assumption-1} and \eqref{assumption-2}. There exists a unique broad bounded solution $K: \D \to \mR^{n \times n}$ of system \eqref{equation-K}, \eqref{bdry-K-1}, \eqref{bdry-K-2}, and \eqref{bdry-K-3}.  Moreover, $(\gamma, B)\in \mR  \times \mR^{k \times m}  \mapsto  K \in [L^\infty(\D)]^{n \times n}$ is analytic.
\end{lemma}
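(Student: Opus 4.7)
The plan is to recast the system \eqref{equation-K}, \eqref{bdry-K-1}, \eqref{bdry-K-2}, \eqref{bdry-K-3} as a Volterra-type integral equation on $\D$ by integrating along characteristics, and then to run a Banach contraction argument in a suitably weighted $L^\infty$ space, in the spirit of Lemma~\ref{lem-WP}. Analyticity in $(\gamma, B)$ will follow from a Neumann-series argument once one observes that the data depend polynomially on these parameters.

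First, for each pair $(i, j)$, the transport part of \eqref{equation-K} is carried along characteristics $\dot x = b_{ij}(x)$, $\dot y = a_{ij}(y)$, which are globally defined thanks to \eqref{cond-lambda}. As explained around Figure~\ref{figure-2}, each $(x, y) \in \D$ is joined to a unique point on $\Gamma_1 \cup \Gamma_2 \cup \Gamma_3$ by a unique characteristic, whose value there is prescribed by one of BC1, BC2, BC3. Integrating the ODE
\[
\frac{d}{ds} K_{ij}(x(s), y(s)) = - \Sigma'_{jj}(y(s))\, K_{ij}(x(s), y(s)) + \gamma \sum_{l=1}^{n} K_{i l}(x(s), y(s))\, C_{l j}(y(s))
\]
from the initial boundary up to $(x, y)$ produces a representation
\[
K_{ij}(x, y) = F_{ij}(x, y; \gamma, B) + \bigl(\mathcal{A}(\gamma, B)\, K\bigr)_{ij}(x, y),
\]
where $F$ collects the BC1 data $\gamma C_{ij}/(a_{ij}(x) - b_{ij}(x))$ together with the zero BC3 data, and the linear operator $\mathcal{A}(\gamma, B)$ encodes (i) the $\gamma$-source integrated along characteristics, and (ii) the BC2 coupling $K_{ij}(x, 0) = \sum_{(r, s) \notin \mathcal{J}} c_{ijrs}(B) K_{rs}(x, 0)$ for $(i, j) \in \mathcal{J}$, in which the $K_{rs}(x, 0)$ are themselves expressed in terms of $K$ by integrating the PDE from $\Gamma_1$ or $\Gamma_3$ down to $(x, 0)$. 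Both $F(\gamma, B)$ and $\mathcal{A}(\gamma, B)$ depend polynomially on $(\gamma, B)$.

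Second, I equip $X := [L^\infty(\D)]^{n \times n}$ with a suitable exponentially weighted sup norm $\|\cdot\|_L$ tailored to decay along the direction of characteristic transport (block-wise if necessary, using a function that is strictly monotone along every characteristic from its initial boundary). The causal/Volterra structure of the characteristic integration yields the estimate $\|\mathcal{A}(\gamma, B)\, K\|_L \le c(\gamma, B)\, L^{-1}\, \|K\|_L$ with $c(\gamma, B)$ locally bounded in $(\gamma, B)$; taking $L$ large therefore makes $\mathcal{A}(\gamma, B)$ a strict contraction and delivers, via Banach's fixed point theorem, the unique bounded broad solution $K = (I - \mathcal{A}(\gamma, B))^{-1} F(\gamma, B)$. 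For analyticity, I complexify and fix any compact set $\mathcal{K}_0 \subset \mC \times \mC^{k \times m}$, then choose $L = L(\mathcal{K}_0)$ so large that $\|\mathcal{A}(\gamma, B)\|_L \le 1/2$ uniformly on $\mathcal{K}_0$. The Neumann series
\[
K = \sum_{p \ge 0} \mathcal{A}(\gamma, B)^p\, F(\gamma, B)
\]
then converges uniformly on $\mathcal{K}_0$ to a sum of polynomials in $(\gamma, B)$ with values in $X$, hence holomorphic on $\mathcal{K}_0$; since $\mathcal{K}_0$ is arbitrary, $(\gamma, B) \mapsto K$ is real-analytic on $\mR \times \mR^{k \times m}$.

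The main obstacle is the first step: one must organize the four characteristic regimes of Figure~\ref{figure-2}, together with the $\mathcal{J}$- versus non-$\mathcal{J}$ split in BC2 on $\Gamma_2$, into a single causal integral equation on $\D$ and then identify a common weight for which $\mathcal{A}$ is a genuine contracting Volterra-type operator. Once this bookkeeping is carried out, the contraction and Neumann-series steps are standard.
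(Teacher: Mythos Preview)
Your existence and uniqueness argument via a weighted-norm contraction is essentially the paper's, which simply refers to the method of Lemma~\ref{lem-WP}; you have correctly identified that the real bookkeeping is to organize the four characteristic regimes of Figure~\ref{figure-2} together with the BC2 coupling on $\Gamma_2$ into a single fixed-point equation and to find a weight (block-wise, or two-parameter as in the proof of Lemma~\ref{lem-WP}) that makes the whole operator contractive.

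For analyticity, however, your route genuinely differs from the paper's. The paper argues by direct complex differentiability: it writes down the linear system that $\partial K/\partial B_{pq}$ (resp.\ $\partial K/\partial \gamma$) must satisfy---obtained by formally differentiating \eqref{equation-K}, \eqref{bdry-K-1}, \eqref{bdry-K-2}, \eqref{bdry-K-3} in the parameter---and shows that this auxiliary system again has a unique bounded broad solution by the same fixed-point argument; that solution is then identified with the complex partial derivative of $K$, giving holomorphy in each scalar variable separately. Your Neumann-series argument is more synthetic: once $F$ and $\mathcal{A}$ are seen to be polynomial in $(\gamma,B)$ and the contraction constant is made $\le 1/2$ uniformly on any compact set by choosing the weight accordingly, analyticity of $(I-\mathcal{A})^{-1}F$ drops out from the locally uniform limit of polynomials, without ever writing down or solving the derivative systems. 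The paper's approach, by contrast, does not need to track the uniform dependence of the contraction constant on the parameters or juggle the equivalence of the weighted norms as the weight varies with the compact set; it simply re-invokes the existence machinery once per partial derivative.
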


\begin{remark} \rm
The broad solution meaning of $K$ is understood via the characteristic approach similar to Definition~\ref{def-broad}. The continuity assumptions in Definition~\ref{def-broad} are replaced by the assumption that $\tilde K(\cdot, y) \in L^2([0, 1])$ is continuous w.r.t. to $y \in [0, 1)$ where $\tilde K(x, y) = K((1-y)x, y)$ and similar facts for $x$ and $x+y$ variables.
\end{remark}

\begin{proof} Using similar approach, one can establish the existence and uniqueness of $K$. The real analytic with respect to each component of $B$ can be proved by  showing that $K$ is holomorphic with respect to each component of $B$.  In fact, for notational ease, assuming again  that $\gamma =1$,  one can prove that
$$
\frac{\partial K}{\partial B_{pq}} = \hat K \mbox{ in } \D
$$
(the derivative is understood for a complex variable),  where $\hat K$ is the bounded broad solution of \eqref{equation-K},
\begin{equation*}
\hat K_{ij} (x,x)= 0 \mbox{ for } x \in (0, 1), \,  1 \le i  \neq j \le k+m,
\end{equation*}
\begin{equation*}
\hat K_{ij}(1, y) = 0 \mbox{ for } y \in (0, 1), \, 1 \le i < j \le k \mbox{ or } k+1 \le j < i \le k+ m,
\end{equation*}
(which are derived from \eqref{bdry-K-1},  and \eqref{bdry-K-3}) and for $(i, j ) \in {\cal J}$,
\begin{equation}\label{bdry-K-2-hatK}
\hat K_{i j}(x, 0) = \sum_{(r, s) \not \in {\cal J}} c_{i j r s}(B) \hat K_{r s}(x, 0) +  \sum_{(r, s) \not \in {\cal J}} \frac{\partial c_{i j r s}(B)}{\partial B_{pq}} K_{r s}(x, 0)  \mbox{ for } x \in (0, 1),
\end{equation}
which is obtained from \eqref{bdry-K-2}. The existence and uniqueness of $\hat K$ can be established as in the proof of Lemma~\ref{lem-WP} where the second term in the RHS of  \eqref{bdry-K-2-hatK} plays a role as the one of $g$ in Lemma~\ref{lem-WP}.  The details  of the proof are left to the reader.

The analyticity with respect to $\gamma$ can be proved by  showing that $K$ is holomorphic with respect to $\gamma$. In fact, one can prove that
$$
\frac{\partial K}{\partial \gamma} = \hbK \mbox{ in } \D,
$$
(the derivative is understood for a complex variable) where  $\hat {\bf K}$ is the bounded broad solution of
\begin{equation}\label{equation-K-bf}
\partial_y \hbK (x, y) \Sigma (y) +  \Sigma(x) \partial_x \hbK(x, y) +  \hbK(x, y)  \Sigma'(y) - \gamma \hbK(x, y) C(y)= K(x, y) C(y) \mbox{ in } \D,
\end{equation}
by \eqref{equation-K},
\begin{equation}\label{bdry-K-1-bf}
\hbK_{ij} (x,x) = \frac{C_{ij}(x)}{ a_{ij}(x) - b_{ij}(x)}  \mbox{ for } x \in (0, 1), \,  1 \le i  \neq j \le k+m,
\end{equation}
\begin{equation*}
\hbK_{ij}(1, x) = 0 \mbox{ for } x \in (0, 1), \, 1 \le i < j \le k \mbox{ or } k+1 \le j < i \le k+ m,
\end{equation*}
by \eqref{bdry-K-1},  and \eqref{bdry-K-3},
and
\begin{equation}\label{bdry-K-2-bf}
\hbK_{i j}(x, 0) = \sum_{(r, s) \not \in {\cal J}} c_{i j r s}(B)\hbK_{r s}(x, 0) \mbox{ for } x \in (0, 1),   \, (i, j) \in {\cal J}
\end{equation}
by  \eqref{bdry-K-2}.  Here $K(x, y)$ denotes the solution corresponding to fixed $\gamma$ and $B$.  Note that $\gamma$ does not appear in the boundary conditions of $\hbK$. The details are omitted.
\end{proof}

A connection between $w$ and $u$ is given in the following proposition.

\begin{proposition} \label{pro-eq-u} Let $w_0 \in L^\infty\big( (0, 1)\big)$ and let $w \in L^\infty  \big((0, T) \times (0, 1) \big)$. Define $u_0$ and  $u$ from $w_0$ and $w$ by \eqref{backstepping} respectively  and let  $S$ be given by \eqref{def-S-*}.
Assume \eqref{assumption-1} and \eqref{assumption-2}.  We have,
if $w$ is a   broad solution of the system
\begin{equation}\label{Sys-w-1}
\left\{\begin{array}{cl}
\partial_t w (t, x) =  \Sigma(x) \partial_x w (t, x) + C(x) w(t, x)  &  \mbox{ for } (t, x)  \in (0, T) \times (0, 1),\\[6pt]
w_-(t, x = 0) = B w_+(t, x = 0) & \mbox{ for } t \in (0, T), \\[6pt]
u_+(t, 1) = \dsp \sum_{r=1}^R A_r (t) u\big(t, x_r \big) +   \int_{0}^1 M(t, y) u(t, y) \, dy &  \mbox{ for }  t \in (0, T), \\[6pt]
w(t = 0)(x) = w_0(x) & \mbox{ for } x \in (0, 1),
\end{array} \right.
\end{equation}
then $u$ is a   broad  solution of the system
\begin{equation}\label{Sys-u-1}
\left\{\begin{array}{cl}
\partial_t u (t, x) =  \Sigma(x) \partial_x u (t, x) + S(x) u(t, 0)  &  \mbox{ for } (t, x)  \in  (0, T) \times (0, 1),\\[6pt]
u_-(t, x = 0) = B u_+(t, x = 0) & \mbox{ for } t \in (0, T),  \\[6pt]
u_+(t, 1) = \dsp \sum_{r=1}^R A_r (t) u\big(t, x_r  \big) +   \int_{0}^1 M(t, y) u(t, y) \, dy &  \mbox{ for }  t \in (0, T), \\[6pt]
u(t = 0, x) = u_0(x) & \mbox{ for } x \in (0, 1).
\end{array} \right.
\end{equation}
\end{proposition}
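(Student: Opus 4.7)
The plan is to promote the formal calculation performed in Section~\ref{sect-backstepping} (between \eqref{backstepping} and \eqref{eq-u}) to a rigorous statement at the level of broad solutions, by a density argument. The core observation is that if $w$ is a $C^1$ classical solution of \eqref{Sys-w-1}, then that very calculation, after using \eqref{equation-K} and \eqref{bdry1-K} to annihilate the integral term and the coefficient of $w(t,x)$, shows
\[
\partial_t u(t,x) - \Sigma(x)\,\partial_x u(t,x) = K(x,0)\,\Sigma(0)\,u(t,0),
\]
where I have used $w(t,0) = u(t,0)$ (the integral in \eqref{backstepping} vanishes at $x=0$). The $x=0$ boundary condition transfers verbatim since $u_-(t,0) = w_-(t,0) = B w_+(t,0) = B u_+(t,0)$, and this in turn gives $u(t,0) = Q u(t,0)$ with $Q$ from \eqref{def-Q}, so that $K(x,0)\Sigma(0) u(t,0) = S(x) u(t,0)$. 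The boundary condition at $x=1$ is written directly in terms of $u$ in both \eqref{Sys-w-1} and \eqref{Sys-u-1}, so it is inherited without change, and the initial condition $u(0,\cdot) = u_0$ is by definition.

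To handle the $L^\infty$ case, I would approximate $w_0$ by a sequence of smooth functions $w_0^\epsilon$ satisfying the relevant compatibility conditions at the corners, so that the corresponding solutions $w^\epsilon$ of \eqref{Sys-w-1} are classical $C^1$ in the sense of Lemma~\ref{lem-WP}. By the boundedness of $K$ from Lemma~\ref{lem2}, the functions $u^\epsilon := w^\epsilon - \int_0^x K(x,y) w^\epsilon(t,y)\,dy$ are then $C^1$ as well, and by the previous paragraph they are classical, hence broad, solutions of \eqref{Sys-u-1}. Passage to the limit rests on two pieces: the continuity of the fixed-point map underlying Lemma~\ref{lem-WP} gives $w^\epsilon \to w$ in ${\cal Y}$, and the Volterra operator $v \mapsto v - \int_0^x K(x,y) v(t,y)\,dy$ is bounded on ${\cal Y}$, so $u^\epsilon \to u$ in ${\cal Y}$. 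The integral identities \eqref{broad-1}--\eqref{broad-4} are stable under convergence in ${\cal Y}$ (since each involves only integration against $L^\infty$ coefficients plus pointwise traces in $C([0,1];L^2(0,T))$), and therefore $u$ satisfies them for the target system \eqref{Sys-u-1}.

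The main technical obstacle is the indirect form of the boundary condition at $x=1$ in \eqref{Sys-w-1}, which is posed on $u_+$ rather than on $w_+$, so matching the compatibility conditions for $w_0^\epsilon$ at the corner $(0,1)$ is awkward. I would sidestep this by approximating $u_0$ first, exploiting that $w \mapsto u$ is a bounded bijection on ${\cal Y}$ (a second-kind Volterra perturbation of the identity), and then recovering $w_0^\epsilon$ from the smooth compatible $u_0^\epsilon$ via the inverse transform. The uniqueness part of Lemma~\ref{lem-WP}, applied to the target system \eqref{Sys-u-1}, then guarantees that the limit is independent of the chosen approximation scheme and really is the broad solution of \eqref{Sys-u-1}.
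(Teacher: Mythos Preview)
Your approximation runs the wrong way. The formal computation in Section~\ref{sect-backstepping} requires differentiating $\int_0^x K(x,y)w(t,y)\,dy$ in $x$ and integrating by parts in $y$, both of which use derivatives of $K$. But Lemma~\ref{lem2} only furnishes $K\in L^\infty(\D)$ as a broad solution; there is no reason for $\partial_x K$ or $\partial_y K$ to exist, and hence no reason for your $u^\epsilon$ to be $C^1$ even when $w^\epsilon$ is. So the step ``by the boundedness of $K$ from Lemma~\ref{lem2}, the functions $u^\epsilon$ are then $C^1$ as well, and by the previous paragraph they are classical solutions'' fails: boundedness of $K$ does not support the integration by parts that the ``previous paragraph'' relies on.

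The paper's proof repairs exactly this point by smoothing $K$, not the data. It builds $C^1$ kernels $K_n$ that still satisfy \eqref{equation-K} and \eqref{bdry1-K} (obtained by perturbing the boundary data on $\Gamma_2$ and $\Gamma_3$ so that the corner compatibility holds), with $K_n\to K$ in $L^1(\D)$ and a uniform $L^\infty$ bound. Independently, $w$ is mollified in the time variable to get $w_\eps\in W^{1,\infty}$ solving the same interior PDE. With both $K_n$ and $w_\eps$ smooth, the formal calculation is legitimate for $u_{n,\eps}$, and one then passes $\eps\to 0$ and $n\to\infty$. Your density-in-the-initial-data idea, even with the clever switch to approximating $u_0$, does not touch the regularity of $K$ and therefore cannot by itself justify the key identity \eqref{eq-u}.
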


\begin{remark} \rm \newtextb{In the two sides of the third condition in \eqref{Sys-w-1}, $u$ is given by \eqref{backstepping}. Therefore, this condition is understood as a condition on $w$.}
By Lemma~\ref{lem-WP}, there exist a unique  broad solution $w$ of \eqref{Sys-w-1} and a unique  broad solution $u$ of \eqref{Sys-u-1}.
\end{remark}

\begin{proof} We first assume in addition that $C$ and $\Sigma$ are smooth on $[0, 1]$. Let $K_{n}$ be a $C^1$- solution of \eqref{equation-K} and \eqref{bdry-K-1} such that
\begin{equation}\label{pro-Kn}
\|K_n\|_{L^\infty(\D)} \le M  \quad \mbox{ and } \quad K_n \to K \mbox{ in } L^1(\D),
\end{equation}
where $M$ is a positive constant independent of $n$.
Such a $K_n$ can be obtained  by considering the solution of \eqref{equation-K}, \eqref{bdry-K-1}, and
\begin{equation*}
K_{n, i j}(x, 0) = \sum_{(r, s) \not \in {\cal J}} c_{i j r s}(B)K_{n, r s}(x, 0) + g_n(x, 0) \mbox{ for } x \in (0, 1)
\end{equation*}
and
\begin{equation*}
K_{n, ij}(1, x) = h_n(x) \mbox{ for } x \in (0, 1)
\end{equation*}
instead of \eqref{bdry-K-2} and \eqref{bdry-K-3} respectively where $(g_n)$, $(h_n)$
are chosen such that $(g_n)$ and $(h_n)$ are bounded in $L^\infty(0, 1)$, $(g_n), (h_n) \to 0$ in $L^1(0, 1)$,  and the compatibility conditions hold for $K_n$ at $(0, 0)$ and $(1, 1)$ \footnote{One needs to establish the stability for $L^1$-norm for the system of $K$. This can be done as in \cite{Bressan}.}. Set, for sufficiently small positive $\eps$,
$$
w_\eps(t, x) = \frac{1}{2 \eps} \int_{t-\eps}^{t + \eps} w(s, x) \, ds \mbox{ in } (\eps, T -\eps) \times (0, 1).
$$
Then $w_\eps \in W^{1, \infty} \big( (\eps, T -\eps)  \times (0, 1) \big)$ and
$$
\partial_t w_\eps (t, x) = \Sigma(x) \partial_x w_\eps(t, x) +  C(x) w_\eps (t, x)  \mbox{ in } (\eps, T -\eps)  \times (0, 1).
$$
Define
$$
u_{n, \eps} (t, x) = w_\eps (t, x) - \int_0^x K_n(x, y) w_\eps (t, y) \, dy  \mbox{ in } (\eps, T -\eps)  \times (0, 1)
$$
and
$$
u_n(t, x) = w(t, x) - \int_0^x K_n(x, y) w(t, y) \, dy  \mbox{ in } (0,  T)  \times (0, 1).
$$
As in \eqref{eq-u}, we have
\begin{equation*}
\partial_t u_{n, \eps}(t, x) = \Sigma (x) \partial_x u_{n, \eps}(t, x) + K_n(x, 0) \Sigma(0) u_{n, \eps} (t, 0) \mbox{ in }   (\eps, T - \eps) \times (0, 1).
\end{equation*}
By letting $\eps \to 0$, we obtain
\begin{equation*}
\partial_t u_{n}(t, x) = \Sigma (x) \partial_x u_{n}  (t, x) + K_n(x, 0) \Sigma(0) u_{n} (t, 0) \mbox{ for } (t, x)  \in (0, T ) \times (0, 1).
\end{equation*}
By letting $n \to + \infty$ and using \eqref{pro-Kn}, we derive that
\begin{equation*}
\partial_t u (t, x) = \Sigma (x) \partial_x u(t, x)  + K(x, 0) \Sigma(0) u (t, 0) \mbox{ for } (t, x)  \in (0, T ) \times (0, 1).
\end{equation*}
This yields the first equation of \eqref{Sys-u-1}. The other parts of \eqref{Sys-u-1} are clear from the definition of $w_0$ and $w$.

We next consider the general case, in which no further additional smooth assumption on  $\Sigma$ and $C$ is required. The proof in the case
can be derived from the previous case by approximating  $\Sigma$ and $C$ by smooth functions. The details are omitted.
\end{proof}

The fact that $w$ is  uniquely determined from $u$ is a consequence of the following standard result on the Volterra equation of the second kind whose proof is omitted; this implies in particular that $w(t, \cdot) \equiv 0$ in $(0, 1)$ if $u(t, \cdot) \equiv 0$ in $(0, 1)$.

\begin{lemma}\label{lem-Volterra} Let $d  \in \mN$,  $ \tau_1, \tau_2 \in \mR$ be such that $\tau_1 < \tau_2$ and let $G: \big\{ (t, s): \tau_1 \le s \le t \le \tau_2 \big\} \to \mR^{d \times d}$ be bounded measurable. For every $F \in \big[L^\infty(\tau_1, \tau_2)\big]^{d}$,  there exists a unique solution $U \in \big[L^\infty(\tau_1, \tau_2) \big]^{d}$ of the following equation
\begin{equation*}
U(t) = F(t) + \int_{\tau_1}^t G(t, s) U(s) \, ds \quad \mbox{ for } t \in (\tau_1, \tau_2).
\end{equation*}
\end{lemma}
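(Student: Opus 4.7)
The plan is to apply the Banach fixed point theorem on $\mathcal{Y}:=[L^\infty(\tau_1,\tau_2)]^d$, exactly in the spirit of the proof of Lemma~\ref{lem-WP}. Define the operator $\mathcal{T}: \mathcal{Y} \to \mathcal{Y}$ by
\begin{equation*}
\mathcal{T}(U)(t) := F(t) + \int_{\tau_1}^t G(t,s) U(s)\, ds \quad \text{for } t \in (\tau_1,\tau_2).
\end{equation*}
Since $G$ is bounded measurable and $U\in\mathcal{Y}$, the Lebesgue integral is well defined and bounded in $t$, so $\mathcal{T}(U) \in \mathcal{Y}$. A fixed point of $\mathcal{T}$ is exactly a solution of the Volterra equation, and any solution in $\mathcal{Y}$ is necessarily a fixed point of $\mathcal{T}$.

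To obtain contraction, I would equip $\mathcal{Y}$ with the equivalent weighted norm
\begin{equation*}
\| U \|_L := \mathop{\text{ess sup}}_{t \in (\tau_1,\tau_2)} e^{-L(t-\tau_1)} |U(t)|,
\end{equation*}
for a parameter $L>0$ to be chosen. For $U,V \in \mathcal{Y}$ and almost every $t\in(\tau_1,\tau_2)$,
\begin{equation*}
|\mathcal{T}(U)(t)-\mathcal{T}(V)(t)| \le \|G\|_{L^\infty} \int_{\tau_1}^t e^{L(s-\tau_1)} e^{-L(s-\tau_1)}|U(s)-V(s)|\,ds \le \|G\|_{L^\infty} L^{-1} e^{L(t-\tau_1)} \| U-V\|_L,
\end{equation*}
so that $\| \mathcal{T}(U)-\mathcal{T}(V) \|_L \le \|G\|_{L^\infty} L^{-1} \| U-V\|_L$. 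Choosing $L := 2(1+\|G\|_{L^\infty})$ makes $\mathcal{T}$ a strict contraction on the complete normed space $(\mathcal{Y},\|\cdot\|_L)$. The Banach fixed point theorem then yields a unique $U \in \mathcal{Y}$ with $\mathcal{T}(U)=U$, which is the desired existence and uniqueness statement.

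There is no real obstacle here; the only point that deserves care is confirming that $\mathcal{T}$ maps $\mathcal{Y}$ into itself (which follows from Fubini/measurability and boundedness of $G$) and that the weighted norm is equivalent to the usual $L^\infty$-norm (clear, since $\tau_2-\tau_1$ is finite), so existence and uniqueness in the weighted norm transfer to existence and uniqueness in $[L^\infty(\tau_1,\tau_2)]^d$.
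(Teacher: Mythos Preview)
Your proof is correct; the paper itself omits the proof of this lemma, calling it a ``standard result on the Volterra equation of the second kind.'' Your weighted-norm contraction argument is indeed the standard one and, as you note, mirrors the proof of Lemma~\ref{lem-WP} in the paper.
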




\section{Null-controllability for generic $\gamma$ and $B$ - Proof of Theorem~\ref{thm1}} \label{sect-thm1}

\subsection{Proof of part 1) of Theorem~\ref{thm1}} \label{sect-thm1-part1}

Choose $u_{k+1}(t, 1) = 0$ for $t \ge 0$. Since $S_{++} = 0$ by  \eqref{cond-S},  we have
$$
u_{k+1}(t, 0) = 0 \mbox{ for } t \ge \tau_{k+1} \quad \mbox{ and } \quad u_{k+1}(T_{opt}, x) = 0 \mbox{ for } x \in (0, 1).
$$
This implies, by  \eqref{bdry:x=0-w},
$$
u_{i}(t, 0) = 0 \mbox{ for } t \ge \tau_{k+1}, 1 \le i \le k.
$$
We derive from \eqref{eq-u} that
$$
u_i(T_{opt}, x) = 0 \mbox{ for } x \in (0, 1), 1 \le i \le k.
$$
The null-controllability at the time $T_{opt}$ is attained for $u$ and hence for $w$ by Lemma~\ref{lem-Volterra}.

\subsection{Proof of part 3) of  Theorem~\ref{thm1}} \label{sect-thm1-part2}

We here establish  part 3) of Theorem~\ref{thm1} even for  $m \ge 1$.  We hence assume that $m \ge 1$ in this section.  Set
\begin{equation}\label{def-tl}
t_0 = T_{opt}, \quad t_1 = t_0 - \tau_1 , \cdots, \quad t_k = t_0 -  \tau_k,
\end{equation}
and, for $1 \le l \le k$,
\begin{equation}\label{def-xij}
x_{0, l} = 0 \quad \mbox{ and } \quad x_{i, l} = x_l(t_0, t_i, 0),  \quad \mbox{ for } 1 \le i \le l.
\end{equation}
Recall that $x_l$ is defined in \eqref{def-xi-1} for $1 \le l \le k$. (See Figure~\ref{fig-xt} in the case where $\Sigma$ is constant.)

\medskip
In the next two sections, we deal with the case $m \ge k$ and $m < k$ respectively.

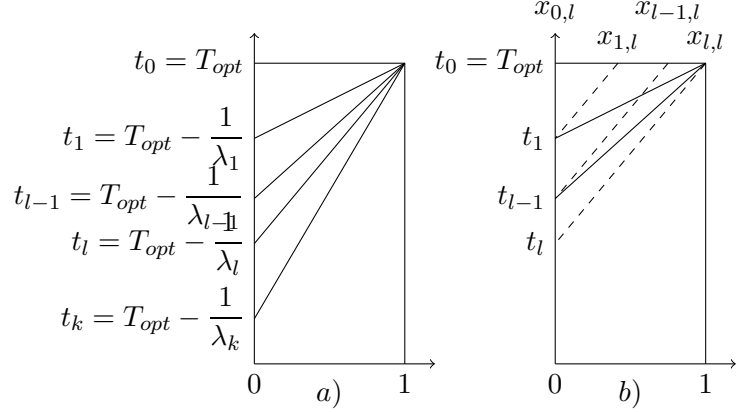
\begin{figure}
\centering
\begin{tikzpicture}[scale=2]

\draw[->] (0,0) -- (1.2,0);
\draw[->] (0,0) -- (0,2.2);
\draw[] (1,0) -- (1,2.0);
\draw[] (0,2) -- (1,2);
\draw[] (1,2) -- (0,1.5);
\draw[] (1,2) -- (0,1.1);
\draw[] (1,2) -- (0,0.8);
\draw[] (1,2) -- (0,0.3);

\draw (0, 2) node[left]{$t_0= T_{opt}$};
\draw (0, 1.5) node[left]{$\dsp t_1= T_{opt} - \frac{1}{\lambda_{1}}$};
\draw (0, 1.1) node[left]{$\dsp t_{l-1}= T_{opt} - \frac{1}{\lambda_{l-1}}$};
\draw (0, 0.8) node[left]{$\dsp t_l= T_{opt} - \frac{1}{\lambda_{l}}$};
\draw (0, 0.3) node[left]{$\dsp t_k= T_{opt} - \frac{1}{\lambda_{k}}$};

\draw (0.0, 0.0) node[below]{$0$};
\draw (1.0, 0.0) node[below]{$1$};

\draw (0.5, -0.2) node[]{$a)$};


\newcommand\z{2.0}

\draw[->] (0+\z,0) -- (1.2+\z,0);
\draw[->] (0+\z,0) -- (0+\z,2.2);
\draw[] (1+\z,0) -- (1+\z,2.0);
\draw[] (0+\z,2) -- (1+\z,2);
\draw[] (1+\z,2) -- (0+\z,1.5);
\draw[] (1+\z,2) -- (0+\z,1.1);

\draw[dashed] (1+\z,2) -- (0+\z,0.8);
\draw[dashed] (0.5/1.2 +\z,2) -- (0+\z,1.5);
\draw[dashed] (0.9/1.2+\z,2) -- (0+\z,1.1);

\draw (0+\z, 2) node[left]{$t_0= T_{opt}$};
\draw (0+\z, 1.5) node[left]{$\dsp t_1$};
\draw (0+\z, 1.1) node[left]{$\dsp t_{l-1}$};
\draw (0+\z, 0.8) node[left]{$\dsp t_l$};

\draw (0+\z,2.2) node[above]{$x_{0, l}$};
\draw (0.5/1.2+\z,2) node[above]{$x_{1, l}$};
\draw (0.9/1.2+\z,2.2) node[above]{$x_{l-1, l}$};
\draw (1+\z, 2) node[above]{$x_{l, l}$};

\draw (0.0+\z, 0.0) node[below]{$0$};

\draw (1.0+\z, 0.0) node[below]{$1$};

\draw (0.5+\z, -0.2) node[]{$b)$};

\end{tikzpicture}
\caption{The definition of $t_l$ is given in a) and the definition of $x_{i, l}$ is given in $b)$ where dashed lines have the same slope for constant $\Sigma$.}\label{fig-xt}
\end{figure}


\subsubsection{On the case $m \ge k$} \label{sect-m>=k}

\newtextb{The idea of the proof is to derive  sufficient conditions  to be able to steer the control system from the initial data to 0 at the time $T_{opt}$. These conditions will be written under the form $U + {\cal K} U = {\cal F}$ (see \eqref{equation-U}) where ${\cal K}$ is an analytic, compact operator with respect to $\lambda$ and $F$ depending on the initial data. We then apply the Fredholm theory to obtain the conclusion. We now proceed the proof.}

We begin with  deriving  conditions for controls to reach the null-controllability at the time $T_{opt}$.   First,  if $m > k$, choose the control
\begin{equation}\label{preparation-u-feedback-0}
\mbox{$u_{l}(t, 1) = 0$ for $0 \le t \le T_{opt} -  \tau_l$  and $k + 1 \le l \le m$}.
\end{equation}
Note that in the case $T_{opt} = \tau_l$, one does not impose any condition for $u_{l}$ in \eqref{preparation-u-feedback-0}.
 Second, choose the control, for $1 \le i \le k$,
\begin{equation}\label{preparation-u-feedback}
u_{m+i}(t, 1) = 0  \mbox{ for } 0 \le t < T_{opt} - \tau_i - \tau_{m+i}.
\end{equation}
Note that in the case $T_{opt} = \tau_{m+i}  + \tau_i$, one does not impose any condition for $u_{m+i}$ in \eqref{preparation-u-feedback}.

Requiring \eqref{preparation-u-feedback-0} and \eqref{preparation-u-feedback} is just a preparation step, other choices are possible. The main part in the construction of the controls  is to  choose the control  $u_{m+i}(t, 1)$ for $t \in (T_{opt} -\tau_{i} - \tau_{m+i}, T_{opt} -\tau_{i})$ and  for $1 \le i \le k$ such that the following $k$ conditions hold:

\medskip
\noindent $a_1)$
$$
u_{k}(T_{opt}, x) = 0 \mbox{ for }  x \in (x_{0, k}, x_{1, k}), \quad
\cdots, \quad  u_{1}(T_{opt}, x) = 0 \mbox{ for }  x \in (x_{0, 1}, x_{1, 1}).
$$

\medskip
\noindent $a_2)$
$$
u_{k}(T_{opt}, x) = 0 \mbox{ for } x \in  (x_{1, k}, x_{2, k}), \quad
\cdots, \quad  u_{2}(T_{opt}, x) = 0 \mbox{ for }  x \in (x_{1, 2}, x_{2, 2}).
$$

\dots

\medskip
\noindent $a_k)$
$$
u_{k}(T_{opt}, x) = 0 \mbox{ for } x \in  (x_{k-1, k}, x_{k, k}).
$$

Set
\begin{equation}\label{def-X}
{\cal X}: = L^2(t_1, t_0) \times L^2(t_2, t_0) \times \cdots \times L^2(t_k, t_0)
\end{equation}
and  denote
$$
U_j(t) = \big(u_{m+j} (t, 0), \cdots, u_{m+k} (t, 0) \big)\tr   \mbox{ for } 1 \le j \le k
$$
and
$$
V_j (t) = \big( u_{k+1}(t, 0), \dots, u_{j}(t, 0)\big)\tr  \mbox{ for } m \le j \le m + k,
$$

We determine
$$
\big(u_{m+1}(\cdot, 0), \cdots, u_{m+k}(\cdot, 0) \big)\tr  \in {\cal X}.
$$
via the conditions in $a_1)$, $a_2)$, \dots, $a_k)$. \medskip
\newtextb{Let us now find necessary and sufficient conditions on $
\big(u_{m+1}(\cdot, 0), \cdots, u_{m+k}(\cdot, 0) \big)\tr  \in {\cal X}
$ so that $a_1)$, \dots, $a_k)$ hold. These are analysed 
in $b_1)$, \dots, $b_k)$ below  respectively.}

\medskip

\noindent $b_1)$ \newtextb{From  \eqref{eq-u} and \eqref{def-S},  using} the characteristic method and the fact that $S_{ij} = 0$ for $ 1\le i, j \le k$,  one can write the conditions in $a_1)$ under the form
$$
(u_1, \cdots, u_k)\tr (t, 0) + \int_t^{t_0} {\cal L}_{1}(t, s) (u_{k+1}, \cdots, u_{k +m})\tr (s, 0)  \, ds = 0 \quad  \mbox{ for } t_1 \le t \le t_0,
$$
for some ${\cal L}_1 \in \big[L^\infty \big( (t, s) ; t_1 \le t \le s \le t_0\big) \big]^{k \times m}$.
Using \eqref{cond-B-1} with $i = k$ \newtext{provided $m>k$}, one can write the above equation  under the  form
\begin{equation}\label{cond-a1}
U_1(t) = A_1  V_m(t) + \int_t^{t_0} G_{1}(t, s) V_m (s) \, ds  + \int_t^{t_0} H_{1}(t, s) U_1(s) \, ds \quad \mbox{ for } t_1 \le t \le t_0,
\end{equation}
for some $G_1 \in [L^\infty (\{ (t, s) ; \, t_1 \le t \le s \le t_0 \})]^{k \times (m-k)}$ and $H_1  \in [L^\infty (\{ (t, s) ; \, t_1 \le t \le s \le t_0 \})]^{k \times k} $ depending only on $S$, $B$, and $\Sigma$, and some matrix $A_1 \in \mR^{k \times (m-k)}$ depending only on $B$. \newtext{In the case $m = k$, one chooses $H_1 = 0$ (there are not $A_1$ and $G_1$ in this case by the convention)}
Since $K$ is analytic with respect to $(\gamma, B)$,  one can check that ${\cal L}_1$ is analytic with respect to $(\gamma, B)$. \newtextb{In fact, ${\cal L}_1$ depends linearly on $S$  and so analytically on $(\gamma, B)$,  and if $\gamma =0$ then ${\cal L}_1 =0$.} This implies that $G_{1}$ and $H_1$ are analytic with respect to $(\gamma, B)$.
It is also clear that $A_1$ is  analytic with respect to $(\gamma, B)$ as well.

\begin{remark} \label{rem-m=k} \rm \newtext{ In the case $m = k$,  $U_1(t) = 0$ for $t_1 \le t \le t_0$. This fact will be used to deal with the case $m  < k$}.
\end{remark}

\noindent $b_2)$
Similar to \eqref{cond-a1}, the conditions in $a_2)$ \newtextb{is equivalent to}
\begin{equation}\label{cond-a2}
U_{2}(t) = A_{2} V_{m+1}(t) +  \int_t^{t_0} G_{2}(t, s) V_{m+1}(s) \, ds  + \int_t^{t_0} H_{2}(t, s) U_{2}(s) \, ds \quad \mbox{ for } t_2 \le t < t_1,
\end{equation}
for some constant matrix $A_2$ and some bounded functions $G_{2}$ and $H_{2}$ defined in   $ \big\{(s, t); t_2 \le t \le s \le t_0 \big\}$ which depend only on $S$, $B$, and $\Sigma$.  Moreover, $A_2$, $G_2$ and $H_2$ are analytic with respect to $(\gamma, B)$.

\dots

\medskip
\noindent $b_k)$ Similar to \eqref{cond-a1}, the condition in $a_k)$ \newtextb{is equivalent to}
\begin{equation}\label{cond-ak}
U_k(t) =  A_{k} V_{m+k-1}(t)  + \int_t^{t_0} G_{k}(t, s) V_{m+k - 1}(s) \, ds  + \int_t^{t_0} H_{k}(t, s) U_k(s) \, ds \quad \mbox{ for } t_k \le t < t_{k-1},
\end{equation}
for some constant matrix $A_k$ and some bounded functions $G_{k}$ and $H_{k}$ defined in $ \big\{(s, t); t_k \le t \le s \le t_0 \big\}$ which depends only on $S$, $B$, and $\Sigma$.  Moreover, $A_k$, $G_k$ and $H_k$ are analytic with respect to $(\gamma, B)$.

We are next concerned about the relations between the components of $u(t,0)$. We have, by the property  of $S_{++}$ in \eqref{cond-S} and the form of $S$ in \eqref{def-S},  and \eqref{preparation-u-feedback-0} and  \eqref{preparation-u-feedback},
\begin{equation}\label{cond-ck}
u_{m+k}(s, 0) = F_{m+k}(s) \mbox{ for } 0 \le s \le t_k,
\end{equation}
\begin{equation}\label{cond-ck-1}
u_{m+ k -1}(s, 0) = F_{m+ k-1}(s) +  \int_{0}^s {\cal G}_{m+ k-1, m + k} (\xi) u_{m + k}(\xi, 0) \, d \xi \mbox{ for } 0 \le s \le t_{k-1},
\end{equation}
\begin{align}\label{cond-ck-2}
u_{m + k -2}(s, 0) = &  F_{m + k-2}(s) +  \int_{0}^s {\cal G}_{m+ k-2, m + k}(\xi) u_{m + k}(\xi, 0) \, d \xi \nonumber \\[6pt]
 & + \int_{0}^s {\cal G}_{m + k-2, m + k-1}(\xi) u_{m + k - 1}(\xi, 0) \, d \xi \mbox{ for } 0 \le s \le t_{k-2},
\end{align}
\dots
\begin{equation}\label{cond-c1}
u_{k+1}(s, 0) = F_{k+1}(s) +  \int_{0}^s \sum_{j = k+2}^{k  + m } {\cal G}_{k+1, j} (\xi) u_j(\xi, 0) \, d \xi  \mbox{ for } 0 \le s \le t_1,
\end{equation}
where ${\cal G}_{i, j}$ depends only on $S$ and $\Sigma$ and is analytic with respect to $(\gamma, B)$,  and $F_i$ depends  only on the initial data. Here we also use  \eqref{preparation-u-feedback-0} and \eqref{preparation-u-feedback}.


Using  (\ref{cond-ck}-\ref{cond-c1}), one can write the equations in $b_1)$, \dots, $b_k)$ under the form
\begin{equation}\label{equation-U}
U + {\cal K} (U) = F \mbox{ in } {\cal X}.
\end{equation}
where
$$
U = \big(u_{\newtextb{m+1}}(\cdot, 0), \cdots, u_{m+k}(\cdot, 0) \big)\tr
$$
and ${\cal K}$ \newtextb{is a Hilbert-Schmidt operator, therefore a compact operator,} and it is analytic with respect to $(\gamma, B)$.

By the theory of analytic compact theory (see, e.g.,  \cite[Theorem 8.92]{ReRo}), for each $B \in {\cal B}$,
$I + {\cal K}$ is  invertible outside a discrete set of $\gamma $ in $\mR$ since $\|{\cal K} \|$ is small if $\gamma$ is small.

Using this fact, since ${\cal B}$ has a finite number of connected components, there exists a discrete subset of $\mR$ such that outside this set, $I + {\cal K}$ is invertible for almost every $B \in {\cal B}$ by the Fredholm theory for analytic compact operator.

Consider $(\gamma, B)$ such that $I + {\cal K}$ is invertible. Then   equation \eqref{equation-U} has a unique solution for all  $F$ in ${\cal X}$.  One can check that if $F$ is bounded then $U$ is bounded since ${\cal K} U$ is bounded.
To obtain the null-controllability at the time $T_{opt}$,  \newtextb{in addition to the preparation step, one chooses $u_{k+m}(1, t)$ for $T_{opt} - \tau_{k+m} - \tau_{k} \le t \le  T_{opt} - \tau_{k+m}$,  \dots, $u_{m+1}(1, t)$ for $T_{opt} - \tau_{m+1} - \tau_{1} \le t \le  T_{opt} - \tau_{m+1}$
 such that $\big(u_{m+1}(\cdot, 0), \cdots, u_{m+k}(\cdot, 0) \big)\tr = U$ (this can be done by the form of $S_{++}$)} and  chooses $u_{l}(t, 1)$ for $T_{opt} - \tau_{l} \le t \le T_{opt}$ and $m+1 \le l \le k + m$ in such a way that
\begin{equation}
\label{final-arrangement}
u_l(T_{opt}, x) = 0  \mbox{ for } x \in (0, 1).
\end{equation}
\newtextb{Requirement \eqref{final-arrangement} is again} possible by the property  of $S_{++}$ in \eqref{cond-S} and
by the form of $S$ in \eqref{def-S}.

\newtextb{\begin{remark} \label{rem-necessary-cond}  The above analysis shows that the existence of a bounded solution $U$ of \eqref{equation-U} implies the existence of a control to steer the system from the initial  data to $0$ in time $T_{opt}$ by the characteristic method. Moreover, in the case where $m = k$ and 
$$
T_{opt} = \tau_1 + \tau_{m+1} =  \dots =  \tau_k + \tau_{m+k},
$$
the existence of such a $U$ is necessary.
\end{remark}}

\newtext{\begin{remark} \label{rem-proof-thm-exact}We now show how to modify the proof of Theorem~\ref{thm1} in the case $m \ge k \ge 1$ to reach the exact controllability.
To obtain the exact controllability with the final state $v$, the requirements in $a_1, \dots, a_k)$ become \\
\noindent $c_1)$
$$
u_{k}(T_{opt}, x) = v_k(x) \mbox{ for }  x \in (x_{0, k}, x_{1, k}), \quad
\cdots, \quad  u_{1}(T_{opt}, x) = v_1(x) \mbox{ for }  x \in (x_{0, 1}, x_{1, 1}).
$$
\medskip
\noindent $c_2)$
$$
u_{k}(T_{opt}, x) = v_{k}(x) \mbox{ for } x \in  (x_{1, k}, x_{2, k}), \quad
\cdots, \quad  u_{2}(T_{opt}, x) = v_2(x) \mbox{ for }  x \in (x_{1, 2}, x_{2, 2}).
$$
\dots\\
\medskip
\noindent $c_k)$
$$
u_{k}(T_{opt}, x) = v_k(x) \mbox{ for } x \in  (x_{k-1, k}, x_{k, k}).
$$
Equations \eqref{cond-a1}, \eqref{cond-a2}, and \eqref{cond-ak} then become
\begin{equation*}
U_1(t) = J_1(t) + A_1  V_m(t) + \int_t^{t_0} G_{1}(t, s) V_m (s) \, ds  + \int_t^{t_0} H_{1}(t, s) U_1(s) \, ds \quad \mbox{ for } t_1 \le t \le t_0,
\end{equation*}
\begin{equation*}
U_{2}(t) =   J_2(t) + A_{2} V_{m+1}(t) +  \int_t^{t_0} G_{2}(t, s) V_{m+1}(s) \, ds  + \int_t^{t_0} H_{2}(t, s) U_{2}(s) \, ds \quad \mbox{ for } t_2 \le t < t_1,
\end{equation*}
\begin{equation*}
U_k(t) =  J_k(t) + A_{k} V_{m+k-1}(t)  + \int_t^{t_0} G_{k}(t, s) V_{m+k - 1}(s) \, ds  + \int_t^{t_0} H_{k}(t, s) U_k(s) \, ds \quad \mbox{ for } t_k \le t < t_{k-1},
\end{equation*}
for some functions $J_1, J_2, \dots, J_k$ depending on the final state $v$. Using  (\ref{cond-ck}-\ref{cond-c1}), one can write these equations  under the form
\begin{equation*}
U + {\cal K} (U) = F \mbox{ in } {\cal X}.
\end{equation*}
where $F$ now also depends on $J_1, \dots, J_k$. The rest of the proof of the exact controllability is unchanged.
\end{remark}}

\newtextb{\begin{remark}\label{rem-OPT} \rm In the case  where $m = k$ and 
$$
T_{opt} = \tau_1 + \tau_{m+1} =  \dots =  \tau_k + \tau_{m+k},
$$
the above analysis also gives the optimality of $T_{opt}$ for all $\gamma$ such that $I + {\cal K}$ is invertible. Indeed,  assume that there exists $T < T_{opt}$ such that one can steer an arbitrary state $u(0, \cdot)$ to 0 at the time $T$. Without loss of generality, one might assume that $T_{opt} -T$ is small.  To simplify the notations, we assume that $\Sigma$ is constant. As mentioned in Remark~\ref{rem-necessary-cond}, a necessary condition to have a control is the existence of a solution $U \in {\cal X}$ of 
\begin{equation}\label{tototo}
U  + {\cal K} U = G, 
\end{equation}
where $G$ now depends on $u_{i}(0, x)$ for $1 \le i \le k$ and $x \in (0, 1)$ and $u_i(0, x)$ for $k+1 \le i \le k+m$ and $x \in (0, 1-s_i)$ with $s_i = (T_{opt} - T)/\lambda_i$  by (\ref{cond-ck}-\ref{cond-c1}). However, for $t \in (1/ \lambda_{k+m} - (T_{opt} - T), 1/ \lambda_{k+m}) $,  
\begin{equation}\label{TTT}
u_{m+k}(t, 0) = u_{k+m}(0, \lambda_{k+m} t),
\end{equation}
the LHS  of \eqref{TTT} is uniquely determined by $G$ from \eqref{tototo} and  the RHS  of \eqref{TTT} can be chosen independently of $G$.  This yields a contradiction. 
\end{remark}}

\subsubsection{On the case $m < k$}\label{sect-m<k}

Set
$$
\hat u(t, x) = \big(u_{k-m +1},  \dots u_{k + m} \big)\tr (t, x) \mbox{ in } (0, T) \times (0, 1),
$$
$$
\hat \Sigma(x) = \mbox{ diag } (-\lambda_{k-m+1}, \cdots, - \lambda_k, \lambda_{k+1, } \cdots, \lambda_{m+k})(x)  \mbox{ in }  (0, 1),
$$
and denote
$$
\mbox{$\hat S(x)$  the $2m \times 2m $ matrix formed from the last $2 m$ columns and the last $2 m$ rows of $S(x)$},
$$
and
$$
\mbox{$\hat B$ the $m \times m$ matrix formed from the last $m$ rows of $B$}.
$$
Then $\hat u$ is a bounded  broad solution of the system
\begin{equation}\label{sys-hatS}
\partial_t \hat u(t, x) = \hat \Sigma(x) \partial_x \hat u(t, x) + \hat S(x) \hat u(t, 0),
\end{equation}
with the boundary condition at $0$ given by $(\hat u_1, \cdots, \hat u_{m})(t, 0)\tr  = \hat B (\hat u_{m+1}, \cdots, \hat u_{2m})(t, 0)\tr $.
Set
$$
\hat T_{opt}: = \max\{\tau_{k + m} + \tau_k, \cdots, \tau_{k+1} + \tau_{k+1 -m}\} = T_{opt}.
$$
Consider the pair $(\gamma, \hat B)$ such that the control  constructed in Section~\ref{sect-m>=k}  for $\hat u$ exists.
Then, for this control,
\begin{equation}\label{m<k-1}
\hat u (T_{opt}, x) = 0 \mbox{ for } x \in (0, 1).
\end{equation}
As observed in Remark~\ref{rem-m=k}, one has
$$
(\hat u_{m+1}, \dots, \hat u_{2m})\tr  (t, 0) = 0 \mbox{ for } t \in [T_{opt} - \tau_{k-m + 1}, T_{opt}].
$$
This yields
\begin{equation}\label{m<k-2}
(u_1, \dots, u_{k-m})\tr  (T_{opt}, x) = 0 \mbox{ for } x \in (0, 1)
\end{equation}
by the form of $S$ given in \eqref{def-S}.

Combining \eqref{m<k-1} and \eqref{m<k-2} yields the null-controllability at the time $T_{opt}$.  \qed

\subsection{Proof of part 2) of Theorem~\ref{thm1}}

\newtextb{Fix $\alpha\neq 0$ and $\beta \neq 0$, and  consider 
\begin{equation}\label{form-K-C-3}
C^{3,3} =   \left(\begin{array}{cccc}
0 & 0 & \alpha (\lambda_{k+2} + \lambda_k) \\[6pt]
0 & 0 & \beta (\lambda_{k+2} - \lambda_{k+1}) \\[6pt]
0 & 0 & 0
\end{array} \right) \mbox{ and } 
C(x)=   C^{k+2, k+2} := \left(\begin{array}{cccc}
0_{k-1, k-1} & 0_{k-1, 3}\\[6pt]
0_{3, k - 1} & C^{3,3}
\end{array}\right) \mbox{ for } k \ge 1.
\end{equation}
Set
\begin{equation}\label{form-K-C}
K^{3, 3} = \left(\begin{array}{cccc}
 0 & 0 & \alpha \\[6pt]
 0 & 0 & \beta \\[6pt]
 0 & 0 & 0
\end{array}\right)
 \quad \mbox{ and } \quad
K(x)=   K^{k+2, k+2} := \left(\begin{array}{cccc}
0_{k-1, k-1} & 0_{k-1, 3}\\[6pt]
0_{3, k - 1} & K^{3,3}
\end{array}\right) \mbox{ for } k \ge 1.
\end{equation}}

 One can check that $K C = 0_{k+2}$ and $K$ is a solution of equation~\eqref{equation-K} by noting that $\Sigma$ is constant. Moreover, \eqref{bdry1-K}  holds by  the choice of $K$ and $C$. We have, by \eqref{def-S},   that
\begin{equation}\label{form-S-K-C}
S(x)=   S^{k+2, k+2} = \left(\begin{array}{cccc}
0_{k-1, k-1} & 0_{k-1, 3}\\[6pt]
0_{3, k - 1} & S^{3,3}
\end{array}\right)
 \mbox{ where } S^{3,3}= \left(\begin{array}{cccc}
 0 & 0 & \lambda_{k+2}  \alpha \\[6pt]
 0 & 0 & \lambda_{k+2} \beta \\[6pt]
 0 & 0 & 0
\end{array}\right).
\end{equation}
In what follows, for simplicity of notations, we only consider the case $k =2$. The other cases can be established similarly.
Suppose  that
\begin{equation}\label{cond-u2}
u_2(t, 0) = a u_{3}(t, 0) + b u_{4}(t, 0) \mbox{ for } t \ge 0.
\end{equation}
\newtextb{Then $a \neq 0$ by  condition~\eqref{cond-B-1}.}
To obtain the null-controllability at the time $T_{opt}$, one  has, from condition $a_1)$,
$$
u_{3}(t, 0) = u_{4}(t, 0) = 0 \mbox{ for } t \in (t_1, t_0)
$$
and, hence from condition $a_2)$ and \eqref{form-S-K-C},
$$
a u_3(t, 0) + b u_4(t, 0) + \int_t^{t_1} \lambda_4 \alpha u_4(s, 0) \, ds = 0 \mbox{ for } t \in (t_2, t_1).
$$
Since, by \eqref{eq-u},  \eqref{def-S-*}, and \eqref{form-S-K-C},
$$
u_3(t, 0) = \int_{t_2}^t \lambda_4 \beta u_4(s, 0) \, ds + f (t) \mbox{ for } t \in (t_2, t_1),
$$
for some $f$ depending on the initial data. By taking $ \beta = \alpha/a$ \newtextb{($a \neq 0$)},  we have
\begin{equation}\label{t1t2-f}
b u_4 (t, 0) + \int_{t_2}^{t_1} \lambda_4 \alpha  u_4(s, 0) \, ds = - a f (t) \mbox{ for } t \in (t_2, t_1).
\end{equation}
By choosing $\alpha$ such that $b + \lambda_4 \alpha (t_1 - t_2)  = 0$, \newtextb{and integrating \eqref{t1t2-f} from $t_2$ to $t_1$,  it follows, since $a \neq 0$, that}
\begin{equation}\label{xxx}
\int_{t_2}^{t_1} f(t) \, dt = 0.
\end{equation}
This is impossible for an arbitrary initial data, for example if $u_4(0, \cdot) = 0$ then $f(t) = u_3 (0, \lambda_3 t)$ and an appropriate choice of $u_3(0, \cdot)$ yields that \eqref{xxx} does not hold. In other words, the system is not null-controllable at the time $T_{opt}$.  \qed

\section{A null-controllability result for all $\gamma$ and $B \in {\cal B}$} \label{sect-FP}

 A slight modification of the proof of part 3) of Theorem~\ref{thm1} gives the following result, where $T_2$ is defined in \eqref{def-T2}.
\begin{proposition}\label{pro-T2} Let $m \ge 2$. Assume that \eqref{relation-lambda} and \eqref{cond-lambda} hold and $B \in {\cal B}$. There exists $\delta > 0$ depending only on $C, B, \Sigma$, and $\gamma$ such that the system  is null-controllable at the time $T_{2} - \delta$.
\end{proposition}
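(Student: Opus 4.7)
The plan is to replay the derivation of part 3) of Theorem~\ref{thm1} with the target time $T_{opt}$ replaced by $T = T_2 - \delta$, where $\delta \in (0, T_2 - T_{opt})$ is to be selected (note that $T_{opt} < T_2$ when $m \ge 2$). Since $T > T_{opt}$, the preparation intervals in \eqref{preparation-u-feedback-0}--\eqref{preparation-u-feedback} with $T_{opt}$ replaced by $T$ are all nonnegative, and the characteristic tracing leading to conditions $b_1), \dots, b_k)$ carries over unchanged. Substituting the coupling identities \eqref{cond-ck}--\eqref{cond-c1} into \eqref{cond-a1}--\eqref{cond-ak} then produces a Fredholm equation
\begin{equation*}
U + {\cal K}(\gamma, \delta)\, U = F(\gamma, \delta, w_0) \qquad \text{in } {\cal X},
\end{equation*}
where ${\cal K}(\gamma, \delta)$ is a Hilbert--Schmidt operator depending analytically on both $\gamma$ and $\delta$, by Lemma~\ref{lem2} together with the analytic dependence of the integration endpoints $t_i = T - \tau_i$ on $\delta$.

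The key step is to show that for the fixed $\gamma \in \mR$ and $B \in {\cal B}$ of the proposition there is some $\delta > 0$ for which $I + {\cal K}(\gamma, \delta)$ is invertible. Applying the analytic Fredholm theorem to the one-parameter family $\delta \mapsto I + {\cal K}(\gamma, \delta)$ reduces this to exhibiting one value of $\delta$ at which the operator is invertible: the non-invertibility set is then discrete in $(0, T_2 - T_{opt})$, and any $\delta$ in its complement finishes the proof, the controls being selected exactly as in Section~\ref{sect-thm1-part2} and \eqref{final-arrangement}. To produce such a $\delta$, I would invoke the null-controllability at time $T_2$ established in \cite{Auriol16, Coron17} for all $\gamma$ and all $B$: transported through the backstepping substitution, this forces the corresponding Fredholm equation at $T = T_2$, i.e.\ at $\delta = 0$, to be solvable for every initial datum, so $I + {\cal K}(\gamma, 0)$ is surjective and, being Fredholm of index zero, invertible; continuity of the analytic family then yields invertibility of $I + {\cal K}(\gamma, \delta)$ for all sufficiently small $\delta > 0$.

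The principal obstacle is the identification of the limiting operator ${\cal K}(\gamma, 0)$ with an object known to be invertible: the proof of \cite{Auriol16, Coron17} uses a different scheme of preparation and active controls, so matching the two constructions at the boundary $\delta = 0$ requires careful bookkeeping of how the initial data propagate through the kernel $K$ of Lemma~\ref{lem2}. An alternative route bypasses this subtlety: by joint analyticity of $(\gamma, \delta) \mapsto I + {\cal K}(\gamma, \delta)$ together with the generic invertibility at $\delta = T_2 - T_{opt}$ (i.e.\ at $T = T_{opt}$) supplied by part 3) of Theorem~\ref{thm1}, the non-invertibility locus is a proper analytic subset of $\mR \times (0, T_2 - T_{opt})$; consequently, for each fixed $\gamma$ the bad slice of $\delta$'s is discrete, which again yields the announced $\delta > 0$ depending on $\gamma$, $B$, $C$ and $\Sigma$.
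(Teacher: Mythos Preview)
Your approach has a genuine gap, and it is worth contrasting with what the paper actually does.

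The paper's proof is direct and does not use the analytic Fredholm theorem at all. It chooses a specific control scheme: force $u_{k+1}(t,0)=\dots=u_{k+m-1}(t,0)=0$ from time $\tau_{k+1},\dots,\tau_{k+m-1}$ on, force $u_{k+m}(t,0)=0$ except on the short window $[\tau_{k+1}-\delta,\tau_{k+1}]$, and use that window to kill $u_k(T_2-\delta,\cdot)$ on the missing segment $[x^*,1]$. This last requirement becomes a \emph{scalar} integral equation for $u_{k+m}(\cdot,0)$ on an interval of length $\delta$, with a bounded kernel depending only on $C,B,\Sigma,\gamma$. For $\delta$ small the integral operator has norm $O(\delta)$, so the equation is solved by the contraction mapping theorem. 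No genericity in $\gamma$ or $B$ enters.

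By contrast, both of your routes fail to yield the result for \emph{every} $\gamma$. In the first route, null-controllability at $T_2$ from \cite{Auriol16,Coron17} does not say that \emph{your} Fredholm equation $U+{\cal K}(\gamma,0)U=F$ is solvable for all $F$: that equation encodes one particular preparation scheme (the choices \eqref{preparation-u-feedback-0}--\eqref{preparation-u-feedback}), and Remark~\ref{rem-necessary-cond} makes explicit that solvability of \eqref{equation-U} is only known to be \emph{necessary} in a very special configuration. So surjectivity of $I+{\cal K}(\gamma,0)$ is not established. In the alternative route, the slicing step is simply false: a proper analytic subset of $\mR\times(0,T_2-T_{opt})$ can contain an entire line $\{\gamma_0\}\times(0,T_2-T_{opt})$ (take the zero set of $(\gamma,\delta)\mapsto \gamma-\gamma_0$). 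Joint analyticity plus properness therefore does \emph{not} imply that for each fixed $\gamma$ the bad set of $\delta$'s is discrete. To run an analytic Fredholm argument in $\delta$ alone you would need, for the given $\gamma$, at least one $\delta$ at which $I+{\cal K}(\gamma,\delta)$ is invertible, and that is precisely the missing input. The paper sidesteps this by changing the control scheme so that the resulting operator has small norm for small $\delta$, which gives invertibility for all $\gamma$ at once.
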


\begin{proof} We only consider here the case $m \ge k$, the case $m < k$ can be handled  as in Section~\ref{sect-m<k}. The controls are chosen so that
\begin{equation}\label{FP-1}
u_{k+1}(t, 0)  = 0 \mbox{ for } t \ge \tau_{k+1}, \quad \dots,  \quad u_{k+m-1} (t, 0) = 0 \mbox{ for } t \ge \tau_{k+ m -1},
\end{equation}
\begin{equation}\label{FP-2}
u_{k+m}(t, 0)  = 0 \mbox{ for } (t \ge \tau_{k+m} \mbox{ and } t \not \in [\tau_{k+1} - \delta, \tau_{k+1}]),
\end{equation}
and $u_{k+m}(t, 0)$ is chosen in $[\tau_{k+1} - \delta, \tau_{k+1}]$ in such a way that
\begin{equation}\label{FP-3}
u_{k}(T_2 - \delta, x) = 0 \mbox{ for } x \in [x^*, 1],
\end{equation}
where $x^* =  x_{k}(T_2 - \delta, \tau_{k+1}, 0)$ (see the definition of $x_k$ in \eqref{def-xi-1}). As in $b_k)$,  we derive that,  in $[\tau_{k+1} - \delta, \tau_{k+1}]$,  \eqref{FP-3} is equivalent to
\begin{equation}\label{fixed-point}
u_{k+m}(t, 0) = \int_{\tau_{k+1}  - \delta}^{\tau_{k+1}} K(t, s) u_{k+m}(s, 0) \, ds + f(t),
\end{equation}
for some bounded function $f$ defined in $[\tau_{k+1} - \delta, \tau_{k+1}]$ which now depends only on the initial data. Here $K: [\tau_{k+1} - \delta, \tau_{k+1}]^2 \to \mR$ is a bounded function depending only on $C, B, \Sigma$, and $\gamma$. Since $\delta$ is small, one can check that the mapping $T: L^2([\tau_{k+1} - \delta, \tau_{k+1}]) \to L^2([\tau_{k+1} - \delta, \tau_{k+1}])$ which is  given by
$$
T(v)(t) = \int_{\tau_{k+1}  - \delta}^{\tau_{k+1}} K(t, s) v(s) \, ds
$$
is a contraction. By the contraction mapping theorem, equation \eqref{fixed-point} is uniquely solvable and the solution is bounded since $f$ is bounded.

We now show how to construct such a control. Since $u_{m+k}(t, 0)$ for $0 \le t \le \tau_{m+k}$ is  uniquely determined by the initial data (by \eqref{cond-S}), one derives from  \eqref{cond-S} that $u_{m+k-1}(t, 0)$ for $0 \le t \le \tau_{m+k -1}$, \dots, $u_{k+1}$ for $0 \le t \le \tau_{k+1} $ are uniquely determined from the initial condition and the requirements on the constructive controls at $(t, 0)$.
It follows from    \eqref{cond-S} again  that
\begin{itemize}
\item $u_{k+m}(t, 1)$ for $t \ge 0$ is uniquely determined from $u_{m+k}(t, 0)$ for $t \ge \tau_{m+k}$,
\item  $u_{m+k-1}(t, 1)$ for $t \ge 0$ is uniquely determined from ($u_{m+k-1}(t, 0)$ for $t \ge \tau_{m+k -1}$ and  $u_{m+k}(t, 0)$ for $t \ge 0$),

\dots,
\item $u_{k+1}(t, 1)$ for $t \ge 0$ is uniquely determined from
($u_{k+1}(t, 0)$ for $t \ge \tau_{k+1}$, $u_{k+2}(t, 0)$ for $t \ge 0$, \dots,   $u_{m+k}(t, 0)$ for $t \ge 0$).
\end{itemize}
The existence and uniqueness of  controls satisfying requirements are established.

It remains to check that the constructive controls give the null-controllability at the time $T_2 - \delta$ if $\delta$ is small enough.  Indeed, by \eqref{FP-1} and \eqref{FP-2}, we have
\begin{equation}\label{FP-4}
u_{k+1}(t, 0) = \dots = u_{k + m}(t, 0) = 0 \mbox{ for } t \ge \tau_{k+1}.
\end{equation}
Since   $S_{--} = 0_{k}$, it follows from  \eqref{bdry:x=0-w} that
\begin{equation}\label{FP-5}
u_{1}(T_{2} - \delta, x) = \dots = u_{k-1}(T_{2} - \delta, x) = 0 \mbox{ for } x \in [0, 1]
\end{equation}
and
\begin{equation*}
u_{k}(T_{2} - \delta, x) =   0 \mbox{ for } x \in [0, x^*],
\end{equation*}
which yields, by \eqref{FP-3},
\begin{equation}\label{FP-6}
u_{k}(T_{2} - \delta, x) =   0 \mbox{ for } x \in [0, 1].
\end{equation}
From \eqref{FP-4} and the form of $S$, we also derive that
\begin{equation*}
u_{k+m}(t, x) = \dots = u_{k+1}(t, x) \mbox{ for } x \in [0, 1] \mbox{ and } t \ge \tau_{k+1};
\end{equation*}
in particular, if $\delta$ is small enough,
\begin{equation}\label{FP-7}
u_{k+m}(T_2 - \delta, x) = \dots = u_{k+1}(T_2 - \delta, x) \mbox{ for } x \in [0, 1].
\end{equation}
The null-controllability at $T_2 - \delta$ now follows from \eqref{FP-5}, \eqref{FP-6}, and \eqref{FP-7}.
\end{proof}

\section{On  the case $m=2$ and $B_{k1} \neq 0$ - Proof of Theorem~\ref{thm2}} \label{sect-thm2}

\newtext{We only establish the null-controllability result. The proof of the exact controllability can be derived similarly as in the spirit mentioned in Remark~\ref{rem-proof-thm-exact} and is omitted.}
Without loss of generality, one might assume that $\gamma = 1$ and $T - T_{opt}$ is small. As mentioned in Remark~\ref{rem-choice}, the choice of $K$ on $\Gamma_3$ in \eqref{bdry-K-3} can be ``arbitrary".  In this section, we modify this choice to reach some analytic property of  $K$. The new $K$ will be defined in $\hat \D$ which is the triangle formed by three points $(0, 0)$, $(1, 0)$,  and $(L, L)$, where $L$ is defined in \eqref{def-L},  this triangle contains $\D$. 
Since $B_{k1} \neq  0$, by \eqref{def-S}, one can replace the  condition $K_{k k} = 0$ in \eqref{cond-S-0} by the condition
$$
\big(S_{-+}\big)_{k1} = 0
$$
while the rest of \eqref{cond-S-0} remains unchanged.
The idea of the proof is to show that one can prepare $u(T- T_{opt}, \cdot)$ using the control in the time interval $[0, T- T_{opt}]$ in such a way that \eqref{equation-U} is solvable. In what follows, we present a direct proof for Theorem~\ref{thm2}.

We first consider the case $k = m $ (=2).  The matrix $S$ then has the form
\begin{equation}\label{form-S22}
S = \left(\begin{array}{cccc}
0 & 0 & * & * \\[6pt]
0 & 0 & 0 & * \\[6pt]
0 & 0 & 0 & * \\[6pt]
0 & 0 & 0 & 0
\end{array}\right).
\end{equation}
We choose a control so  that $u_{3}(t, 0) = u_{4}(t, 0) = 0$ for $ t  \ge T - \tau_1$, $u_4(t, 0) = 0$ for $\tau_4 \le t \le T - \tau_2 $, and $u_3(t, 0) = 0$ for $\tau_3 \le t \le T_{opt}- \tau_1$ (the last two choices are just a preparation step), and  as in $a_2)$,  $u_4(t, 0)$ for  $t \in (T - \tau_2, T - \tau_1)$ is required to ensure that
\begin{equation}\label{coucou}
u_2(T, x) = 0 \mbox{ for } x \in [x_{12}, 1]
\end{equation}
(see \eqref{def-xij} for the definition of $x_{12}$).
One can verify that the null-controllability is attained at $T$ for such a control if it exists. As in the proof of Proposition~\ref{pro-T2}, it suffices to show  that \eqref{coucou} is solvable for some (bounded) choice of $u_3(t, 1)$ for $t \in (T_{opt} - \tau_1 - \tau_3, T - \tau_1 - \tau_3)$.  Let $\chi_O$ denote a characteristic function of a subset $O$ of $\mR$.
By the form of $S$ in \eqref{form-S22} and the fact that $B \in {\cal B}$, equation \eqref{coucou} is equivalent to, for  $t \in (T - \tau_2, T - \tau_1)$,
 \begin{equation}\label{eq-K1}
h(t) + u_4(t, 0) + \alpha u_3(t, 0) \chi_{[T_{opt} - \tau_1, T-\tau_1]}= \int_{T -\tau_2}^t g(t-s) u_4(s, 0) \, ds + \int_t^{T - \tau_1} f(s-t) u_4(s, 0) \, ds,
 \end{equation}
 where $g$ and $f$ are two functions depending only on $K$, $B$, and $\Sigma$, $\alpha$ is a non-zero constant (since $B_{21} \neq 0$), and $h(t)$ is a function now depends only on $B$, $K$, $\Sigma$, and the initial condition. Moreover, $f$ and $g$ are analytic by Lemma~\ref{lem-A2} below.   Let $
 {\cal K}_1: L^2(T - \tau_2, T - \tau_1) \to L^2(T - \tau_2, T - \tau_1)
 $ be defined by the RHS of \eqref{eq-K1}. Then the adjoint operator  ${\cal K}_1^*:  L^2(T - \tau_2, T - \tau_1)  \to L^2(T - \tau_2, T - \tau_1)$ is given by
 \begin{equation*}
{\cal K}_1^*(v) = \int_{t}^{T - \tau_1} g(s-t) v(s) \, ds  + \int_{T - \tau_2}^{t} f(t-s) v(s) \, ds.
 \end{equation*}

Let $V$ be an eigenfunction of ${\cal K}_1^*$ with respect to the eigenvalue $-1$. We have, by Lemma~\ref{lem-A1} below,
\begin{equation}\label{claim1-thm2}
V \not \equiv 0 \mbox{ in a neighborhood of $T - \tau_1$}.
\end{equation}
Since  the kernel of $I + {\cal K}_1^*$ is of finite dimension, one can prepare the state at the time $T- T_{opt}$ (i.e. $u_3(t, 1)$ for $t \in (T_{opt} - \tau_1 - \tau_3, T- \tau_1 - \tau_3)$) in such a way that the RHS of \eqref{eq-K1} is orthogonal to the kernel of $I + {\cal K}_1^*$. It follows from the Fredholm theory  that \eqref{eq-K1} is solvable and the solution is bounded.

We next consider the case $k > m=2$. The proof in this case follows from the previous one as in Section~\ref{sect-m<k}. We finally consider the case $k=1$ and $m =2$.  The matrix $S$ then has the form
\begin{equation}\label{form-S12}
S = \left(\begin{array}{cccc}
 0 & 0 & * \\[6pt]
 0 & 0 & * \\[6pt]
 0 & 0 & 0
\end{array}\right).
\end{equation}
We choose  a control such that  $u_3(t, 0) = 0$ for $\tau_3 \le t \le T - \tau_1$,  $u_2(t, 0) =  0$ for $\tau_2 \le t \le T_{opt}$ (a preparation step),  $u_2(t, 0) = u_3(t, 0) =0 $ for $t \ge T$,  and  $u_3(t, 0)$ for  $t \in (T- \tau_1, T)$ is required to ensure that
\begin{equation}\label{coucou2}
u_1(T, x) = 0 \mbox{ for } x \in [0, 1].
\end{equation}
One can verify that the null-controllability is attained at $T$ for such a control if it exists. As in the proof of Proposition~\ref{pro-T2}, it suffices to show that \eqref{coucou2} is solvable.
As before,   equation \eqref{coucou2} is equivalent to, for $t \in (T- \tau_1, T)$,
 \begin{equation*}
h(t) + u_3(t, 0) + \alpha u_2(t, 0) \chi_{[T_{opt} , T]} = \int_{T - \tau_1}^t g(t-s) u_3(s, 0) \, ds + \int_t^{T} f(s-t) u_3(s, 0) \, ds,
 \end{equation*}
 where $g$ and $f$ are two functions depending only on $K$, $B$, and $\Sigma$, $\alpha$ is a non-zero constant (since $B_{21} \neq 0$), and $h(t)$ is a function now depends only on $B$, $K$, $\Sigma$, and the initial condition.   Moreover, $f$ and $g$ are analytic by Lemma~\ref{lem-A2} below.  The proof now follows as in the case $k = m = 2$ and the details are omitted.
\proofend

\medskip
The following result is used in the proof of Theorem~\ref{thm2}.

%
%
%
%

\begin{lemma} \label{lem-A1} Let  $T>0$, $f, g \in C^1([0, T])$ and let $V$ be a continuous function defined in $[0, T]$  such that
\begin{equation*}
V(t)  = \int_0^t f(t - s) V(s)  + \int_t^T g(s -t) V(s) \, ds \mbox{ for } t \in [0, T].
\end{equation*}
Assume that $g$ is analytic on $[0, T]$ and $V = 0 $ in a neighbourhood of $0$. Then $V \equiv 0$ on $[0, T]$.
\end{lemma}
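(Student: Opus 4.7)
The plan is to argue by contradiction using the analyticity of $g$ to propagate the vanishing of $V$. Set
\[
t_0 := \sup\big\{t \in [0,T] : V \equiv 0 \text{ on } [0, t]\big\},
\]
which is strictly positive by hypothesis. If $t_0 = T$ we are done, so suppose $t_0 < T$, and aim for a contradiction by showing $V \equiv 0$ slightly past $t_0$.

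The first step is to restrict the functional equation to $t \in [0, t_0]$: the integral $\int_0^t f(t-s) V(s)\, ds$ vanishes there since $V \equiv 0$, leaving the identity $\phi(t) = 0$ for $t \in [0, t_0]$, where
\[
\phi(t) := \int_{t_0}^T g(s-t) V(s)\, ds.
\]
The crucial observation is that $\phi$ extends to an analytic function on a real interval strictly larger than $[0, t_0]$. Indeed, since $g$ is analytic on a complex neighborhood of $[0,T]$, it extends holomorphically to a neighborhood of $[-\delta, T+\delta]$ for some $\delta>0$. For $s \in [t_0, T]$ and $t$ with $\mathrm{Re}(t) \in (-\delta, t_0 + \delta)$, the argument $s-t$ stays in this enlarged strip, so $\phi$ is holomorphic there. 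Vanishing of $\phi$ on the open subinterval $(0, t_0)$ together with the identity theorem then upgrades $\phi \equiv 0$ on all of $(-\delta, t_0 + \delta)$.

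The second step exploits this extended vanishing to rewrite the equation on a right neighborhood of $t_0$. For $t \in [t_0, t_0 + \delta)$, I would decompose
\[
\int_t^T g(s-t) V(s)\, ds \;=\; \phi(t) - \int_{t_0}^t g(s-t) V(s)\, ds \;=\; -\int_{t_0}^t g(s-t) V(s)\, ds,
\]
and combine with $\int_0^t f(t-s) V(s)\, ds = \int_{t_0}^t f(t-s) V(s)\, ds$ (since $V \equiv 0$ on $[0,t_0]$) to obtain
\[
V(t) \;=\; \int_{t_0}^t \big[f(t-s) - g(s-t)\big] V(s)\, ds, \qquad t \in [t_0, t_0 + \delta).
\]
This is a scalar Volterra equation of the second kind with zero inhomogeneity and bounded kernel (noting that $g$, now evaluated at $s-t \in [-\delta, 0]$, is controlled by the analytic extension). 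A standard Gronwall argument then forces $V \equiv 0$ on $[t_0, t_0 + \delta)$, contradicting the maximality of $t_0$; hence $t_0 = T$ and $V \equiv 0$ on $[0,T]$.

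The main obstacle is the first step: identifying that the residual quantity $\phi$ admits an analytic extension past the support boundary $t_0$, and being careful about the complex domain on which this extension lives (which depends on the strip of analyticity of $g$ around $[0,T]$). Once this extension is in place, the second step is essentially a turning-around trick that converts the backward-looking integral $\int_t^T$ into a forward Volterra integral $\int_{t_0}^t$, after which Gronwall closes the argument immediately. Note that the asymmetry of the hypothesis (analyticity of $g$ only, and vanishing of $V$ near the \emph{left} endpoint) is essential: the analytic continuation works precisely because $g$ multiplies the integral on the side opposite to where $V$ already vanishes.
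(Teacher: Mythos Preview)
Your argument is correct and takes a genuinely different route from the paper's.

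The paper proceeds by showing that $V$ itself is real-analytic on all of $[0,T]$: it differentiates the integral equation repeatedly, derives the recursion $a_{n+1} \le \sum_{k=0}^n a_{n-k} b_k$ for $a_n = \|V^{(n)}\|_{L^\infty}$ and $b_k$ built from the $C^{k+1}$-norms of $g$ and the $C^1$-norm of $f$, and bounds $a_n \le c^n n!$. Analyticity of $V$ plus vanishing on an open set then forces $V\equiv 0$. Your proof instead localizes at the boundary $t_0$ of the vanishing set: you observe that the ``future'' integral $\phi(t)=\int_{t_0}^T g(s-t)V(s)\,ds$ is holomorphic in $t$ on a complex strip (only the analyticity of $g$ enters here), use the identity theorem to push $\phi\equiv 0$ past $t_0$, and then rewrite the equation on $[t_0,t_0+\delta)$ as a homogeneous Volterra equation with bounded kernel, which Gronwall kills.

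Your approach is more elementary and more economical: it never touches the regularity of $V$ beyond continuity, and it isolates exactly where the analyticity of $g$ is used. The paper's approach yields the stronger intermediate statement that $V$ is analytic on $[0,T]$, at the cost of the derivative bookkeeping in \eqref{lem-A1-3}--\eqref{lem-A1-t3}. Both arguments make the asymmetry of the hypotheses transparent: only $g$ needs to be analytic because the obstruction to propagating $V\equiv 0$ to the right is entirely carried by the $\int_t^T g(s-t)V(s)\,ds$ term.
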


\begin{proof} It suffices to prove that $V$ is analytic on $[0, T]$. We have
\begin{equation}\label{lem-A1-1}
V'(t) = f(0) V(t) + \int_0^t f'(t-s) V(s) \, ds - g(0) V(t) - \int_t^T g'(s-t) V(s) \, ds.
\end{equation}
Since $V = 0 $ in a neighborhood of $0$, an integration by parts gives
\begin{equation}\label{lem-A1-2}
V'(t) = \int_0^t f(t - s) V'(s)  + \int_t^T g(t -s) V'(s) \, ds - g(T -t) V(T).
\end{equation}
By recurrence, we obtain, for $n \ge 0$,
\begin{align}\label{lem-A1-3}
V^{(n+1)}(t) =  & f(0) V^{(n)}(t) + \int_0^t f'(t-s) V^{(n)}(s) \, ds - g(0) V^{(n)}(t) - \int_t^T g'(s-t) V^{(n)}(s) \, ds \nonumber \\[6pt]
& + \sum_{k=0}^{n-1} (-1)^{n-k+1} g^{(n-k)}(T-t) V^{(k)}(T)
\end{align}
and
\begin{equation}\label{lem-A1-4}
V^{(n+1)}(t) = \int_0^t f(t - s) V^{(n+1)}(s)  + \int_t^T g(t -s) V^{(n+1)}(s) \, ds  + \sum_{k=0}^{n} (-1)^{n-k+1} g^{(n-k)}(T-t) V^{(k)}(T).
\end{equation}
By rescaling, without loss of generality, one might assume that
$$
T = 1 \quad \mbox{ and } \quad \| V\|_{C^1([0, T])} = 1.
$$
Set
\begin{equation*}
a_n = \| V^{(n)} \|_{L^\infty([0, T])}  \quad \mbox{ and } \quad b_n  =  \| g^{(n)}\|_{L^\infty([0, T])} + \| g^{(n+1)} \|_{L^\infty([0, T])} + 2 \|f \|_{C^1([0, T])}.
\end{equation*}
Using \eqref{lem-A1-3}, we obtain
\begin{equation}\label{lem-A1-t1}
a_{n+1} \le \sum_{k=0}^n a_{n-k} b_{k}.
\end{equation}
We have, by the analyticity of $g$,
\begin{equation}\label{lem-A1-t2}
b_k \le c^k k!.
\end{equation}
In this proof, $c$ denotes a constant greater than 1 and  independent of $k$ and $n$. It is clear that
\begin{equation}\label{lem-A1-t3}
\sum_{k=0}^n c^k k! c^{n-k} (n-k)! \le c^{n} (n+1)!,
\end{equation}
Combining \eqref{lem-A1-t1}, \eqref{lem-A1-t2}, and \eqref{lem-A1-t3} and using a recurrence argument yield
$$
a_n \le c^n n!.
$$
The analyticity of $V$ now follows from the definition of $a_n$. The proof is complete.
\end{proof}

The second lemma yields the analyticity of $g$ and  $f$ in the definition of ${\cal K}_1^*$ in the proof of Theorem~\ref{thm2}.

\begin{lemma} \label{lem-A2} Let $l \ge  4$ and $\gamma_1 \le \gamma_2 \le \cdots  \le \gamma_l$ be such that ${\cal I}_1, {\cal I}_2, {\cal I}_3, {\cal I}_4 \neq \emptyset$,  where
$$
{\cal I}_1 = \{ i : \; \gamma_i \le 0\}, \quad {\cal I}_2 = \{i: \; 0 < \gamma_i <  1\},  \quad {\cal I}_3 = \{i: \;\gamma_i = 1\}, \quad {\cal I}_4 = \{i: \; \gamma_i > 1\}.
$$
Denote $\hD$ the triangle formed by three lines $y = x$, $y = 0$, and $y = \gamma_{i_0} x - \gamma_{i_0}$ where $i_0 = \min {\cal I}_4$. 
Let $G : [0, \gamma_{i_0}/ (\gamma_{i_0} - 1)] \to \mR^{n \times n}$ be analytic,  and denote $\Gamma = \mbox{diag} (\gamma_1, \cdots, \gamma_l)$
and
$$
\Lambda: = \{ (x, y) \in \partial \hD; x = y \} = \{ (x, x); x \in [0, \gamma_{i_0}/ (\gamma_{i_0} - 1)] \}.
$$
Let  $f_i$ $(i \in {\cal I}_1 \cup {\cal I}_4)$ be analytic functions defined in a neighborhood of  $\Lambda$
and let $c_{i, j} \in \mR$ for $1 \le i, j \le l$. Assume that  $v$ is  the unique broad solution of the system
\begin{equation*}
\left\{\begin{array}{c}
\partial_x v (x, y ) + \Gamma \partial_y v (x, y)  - G(y) v (x, y) = 0 \mbox{ in } \hD, \\[6pt]
v_{i}(x, x) = f_i(x, x)  \mbox{ for } (x, x) \in \Lambda, \, i \in {\cal I}_1  \cup {\cal I}_4,  \\[6pt]
v_{i} (x, 0 )= \sum_{j \in {\cal I}_1  \cup {\cal I}_4 } c_{ij} v_{j} (x, 0) \mbox{ for } x \in (0, 1), \, i \in {\cal I}_2  \cup {\cal I}_3.
\end{array}\right.
\end{equation*}
Then $v$ is analytic in $\bar \Delta_{i+1} \setminus \Delta_i$ for $i \in \hat {\cal I}_2$ where $\hat I_2 = \big\{i \in {\cal I}_2 \mbox{ or } i + 1 \in {\cal I}_2 \big\}$ and $\Delta_j$ is the open triangle formed by three lines  $y = \gamma_j x$, $y = 0$, and $y = \gamma_{i_0} x - \gamma_{i_0}$.
\end{lemma}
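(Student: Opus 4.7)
The plan is to propagate the given analytic data along characteristics. Along each characteristic of slope $\gamma_i$, the component $v_i$ satisfies the linear ODE $\frac{d}{dx}v_i = \sum_j G_{ij}(y)\,v_j$ with analytic coefficient, so analyticity is preserved by transport; the only obstruction to global analyticity inside $\hD$ can come from the corner at the origin, from which the critical lines $y=\gamma_j x$ emanate. The strips $\bar\Delta_{i+1}\setminus\Delta_i$ are precisely the connected components of $\hD$ separated by these critical lines, and this is the geometric setup in which propagation of analyticity succeeds.

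Step 1 (Volterra reduction for the boundary traces). I first use Duhamel's formula along each characteristic to express, for $j\in{\cal I}_1\cup{\cal I}_4$, the trace $v_j(\cdot,0)$ on the bottom side in terms of $f_j$ evaluated at the opposite endpoint on $\Lambda$ of the characteristic, plus an integral of $Gv$ along the characteristic. The relation $v_i(x,0)=\sum_{j\in{\cal I}_1\cup{\cal I}_4}c_{ij}v_j(x,0)$ for $i\in{\cal I}_2\cup{\cal I}_3$ closes the system. The result is a coupled Volterra integral system for $(v_1(\cdot,0),\dots,v_l(\cdot,0))$, with analytic kernel (built from $G$ and the affine characteristic maps) and analytic source (built from the $f_j$'s), valid on each open subinterval of the bottom side delimited by the arrival points of characteristics from the origin or from the top corner of $\hD$.

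Step 2 (Analyticity of the traces). I then show that the unique bounded solution of this coupled Volterra system is analytic on each such open subinterval. The argument follows the template of Lemma~\ref{lem-A1}: differentiating the equation $n$ times and integrating by parts transfers derivatives from the unknown onto the analytic kernel, and an induction using $\|G^{(k)}\|_{L^\infty}\le c^k k!$ and $\|f_j^{(k)}\|_{L^\infty}\le c^k k!$ together with the Cauchy-product bound $\sum_{k=0}^n c^k k!\,c^{n-k}(n-k)!\le c^n(n+1)!$ yields $\|v_j^{(n)}\|_{L^\infty}\le C^n n!$ on each open subinterval. This gives analyticity of all the boundary traces on the relevant open subarcs of $\partial\hD$.

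Step 3 (Propagation into the strip). For $i\in\hat{\cal I}_2$ and any $(x,y)$ in the open strip $\bar\Delta_{i+1}\setminus\Delta_i$, the characteristic of slope $\gamma_j$ through $(x,y)$ meets $\partial\hD$ in one of the analytic subarcs produced in Step 2, by the very definition of the strip. Duhamel's formula then represents $v_j(x,y)$ as an analytic boundary value plus an integral of $G\,v$ along an analytically parametrized characteristic; since the composed system is linear with analytic coefficients and analytic forcing, the same Cauchy-product bookkeeping as in Step 2, now carried out in two variables, yields analyticity of $v$ in the open strip, extending to $\bar\Delta_{i+1}\setminus\Delta_i$ by continuity. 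The main obstacle is Step 2: one must carefully propagate the factorial bounds through the coupled Volterra system, keeping the constants under control so that no extra factor $n$ creeps in at each iteration.
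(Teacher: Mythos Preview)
Your plan is plausible but takes a more circuitous route than the paper, and Step~1 is not as immediate as you suggest. The paper never isolates the boundary traces; it proves directly, by induction on $k$, that $\|v\|_{C^k(\bar\Delta_{i+1}\setminus\Delta_i)}\le C^k\|f\|_{C^k(\Lambda)}$. The inductive step rests on one structural observation: $V:=\partial_x v$ satisfies \emph{exactly the same type of system} as $v$ --- same PDE $\partial_x V+\Gamma\partial_y V-G(y)V=0$, same linear relation on $\{y=0\}$, and diagonal data $V_i(x,x)=g_i(x,x)$ on $\Lambda$ with $g_i$ built from $(Gv)(x,x)$ and $\tfrac{d}{dx}[f_i(x,x)]$. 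Applying the inductive hypothesis to $V$ with data $g$, and then using the PDE to recover $\partial_y v$ from $\partial_x v$, gives the $C^{k+1}$-bound for $v$; since $f$ is analytic, $\|f\|_{C^k}\le c^k k!$ and analyticity follows. This invariance under $\partial_x$ replaces all the Cauchy--product bookkeeping you anticipate in Steps~2--3 by a single fixed-point $C^1$ estimate iterated verbatim.

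The wrinkle in your Step~1 is that the Duhamel integral $\int Gv$ along a $j$-characteristic involves \emph{interior} values $v_k(x',y')$, not boundary traces, so writing $v_j(\cdot,0)$ for $j\in{\cal I}_1\cup{\cal I}_4$ via Duhamel and then invoking the relation for $j\in{\cal I}_2\cup{\cal I}_3$ does \emph{not} by itself produce a closed Volterra system in the unknowns $(v_1(\cdot,0),\dots,v_l(\cdot,0))$. To close it you must iterate the characteristic representation (equivalently, construct the resolvent of the two-dimensional fixed-point map) and then check that the resulting series kernel inherits the analytic factorial bounds on each subinterval. That is doable, but it is exactly the extra layer of work that the paper's differentiation trick sidesteps; once you carry it out, your Step~3 essentially reproduces the paper's argument anyway, just phrased in integral-equation rather than PDE form.
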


%
%
%
%
%
%
%
%
%
%
%
%
%
%
%

\begin{proof} We first prove by recurrence that, for $k \ge 1$,
\begin{equation}\label{recurrence}
\|v\|_{C^k(\bar \Delta_{i+1} \setminus \Delta_i)} \le C^k \|f\|_{C^k(\Gamma)} \mbox{ for } i \in \hat {\cal I}_2.
\end{equation}
In this proof, $C$ denotes a positive constant independent of $k$ and $f$.
Indeed, using the standard fixed point iteration, one can show that $v \in C^{1} (\bar \Delta_{i+1} \setminus \Delta_i)$ ($i \in \hat {\cal I}_2$);  moreover,
\begin{equation}
\| v\|_{C^1(\bar \Delta_{i+1} \setminus \Delta_i)} \le  C \| f\|_{C^1(\Gamma)} \mbox{ for } i \in \hat {\cal I}_2.
\end{equation}
Hence \eqref{recurrence} holds for $k=1$.  Assume that \eqref{recurrence} is valid for some $k \ge 1$. We prove that  it holds  for $k+1$. Set
$$
V = \partial_{x} v \mbox{ in } \hD.
$$
We have
\begin{equation*}
\left\{\begin{array}{c}
\partial_x V (x, y)+ \Gamma \partial_y V(x, y)  - G(y) V (x, y) = 0  \mbox{ in } \hD, \\[6pt]
V_{i}(x, x) = g_i(x, x)  \mbox{ for } (x, y) \in \Lambda,  i \in {\cal I}_1  \cup {\cal I}_4,  \\[6pt]
V_{i} = \sum_{j \in {\cal I}_1  \cup {\cal I}_4 } c_{ij} V_{j}  \mbox{ for } x \in (0, 1), \mbox{ for } i \in {\cal I}_2  \cup {\cal I}_3,
\end{array}\right.
\end{equation*}
where
$$
g_i(x,x) = a_i G(x) v_i(x, x) + b_i \frac{d}{dx} [f_i (x, x)]
$$
for some positive constant $a_i, b_i \in \mR$ depending only on $\Gamma$.  This is obtained by considering the first equation and the derivative with respect to $x$ of the second equation in the system of $v$.  By the recurrence, one has
$$
\| u \|_{C^k(\bar \Delta_{i+1} \setminus \Delta_i)}   \le  C^k \| g \|_{C^k(\Gamma)} \mbox{ for } i \in \hat {\cal I}_2.
$$
Using the equation of $v$, one derives that
$$
\|u\|_{C^{k+1}(\bar \Delta_{i+1} \setminus \Delta_i)} \le C^{k+1} \|f\|_{C^{k+1}(\Gamma)} \mbox{ for } i \in \hat {\cal I}_2.
$$
Assertion \eqref{recurrence} is established.

The conclusion now follows from the analyticity of $f$.
\end{proof}

\section{On the case $C \equiv 0$ - Proof of Proposition \ref{pro-C}} \label{sect-pro-C}

Note that $S \equiv 0$ since  $C \equiv 0$.  We first construct a time independent feedback to reach the null-controllability at the time $T_{opt}$.  We begin with  considering the case $m >  k$.  Condition  $a_k)$  can be written under the form
\begin{equation}\label{kd-1}
u_{m+k}(t, 0) = M_k (u_{k +1 }, \cdots, u_{m + k - 1})\tr (t, 0)  \quad \mbox{ for } t \in (t_k, t_{k-1}),
\end{equation}
for some  constant  matrix  $M_k$ of size $1 \times (m  -1)$ by considering \eqref{cond-B-1} with $i= 1$.  Condition $a_{k-1})$ can be written under the form \eqref{kd-1} and
\begin{equation}\label{kd-2}
u_{m+k-1}(t, 0) = M_{k-1} (u_{k +1 }, \cdots, u_{m + k - 2})\tr (t, 0)  \quad \mbox{ for } t \in (t_k, t_{k-1}),
\end{equation}
for some  constant  matrix  $M_{k-1}$ of size $1 \times (m  -2)$ by applying \eqref{cond-B-1} with $i= 2$ and using the Gaussian elimination method, etc. Finally, condition $a_1)$ can be written under the form \eqref{kd-1}, \eqref{kd-2}, \dots, and
\begin{equation}\label{kd-3}
u_{m+1}(t, 0) = M_{1} (u_{k +1 }, \cdots, u_{m})\tr (t, 0)  \quad \mbox{ for } t \in (t_k, t_{k-1}),
\end{equation}
for some  constant  matrix  $M_{2}$ of size $1 \times (m  - k)$ by applying \eqref{cond-B-1} with $i= k$ and using the Gaussian elimination method if $m>k$; this condition is replaced by the one $u_{m+1} = 0$ in the case $m = k$. \newtextb{The matrices $M_{1}, \cdots, M_{k}$ can be obtained via the Gaussian elimination method starting  with $M_1$ using condition \eqref{cond-B-1} with $i=1$, and then with $M_2$ using  condition \eqref{cond-B-1} with $i=2$, $\cdots$,  and finally with $M_k$ using  condition \eqref{cond-B-1} with $i=k$.}

We now choose the following feedback law
\begin{equation}\label{bdry-1}
u_{m+ k}(t, 1) = M_{k} \Big(u_{k+ 1}\big(t, x_{k+1} (- \tau_{m + k}, 0,  0) \big), \dots, u_{k+ m-1}\big(t, x_{k+ m - 1} (-\tau_{m + k }, 0,  0) \big)\Big),
\end{equation}
\begin{equation}\label{bdry-2}
u_{m+ k - 1}(t, 1) = M_{k-1} \Big(u_{k+ 1}\big(t, x_{k+1} (- \tau_{m + k - 1}, 0,  0) \big), \dots, u_{k+ m-2}\big(t, x_{k+ m - 2} (-\tau_{m + k -1}, 0,  0) \big) \Big),
\end{equation}
\dots
\begin{equation}\label{bdry-3}
u_{m+ 1}(t, 1) = M_{1} \Big(u_{k+ 1} \big(t, x_{k+1} (- \tau_{m + 1}, 0,  0) \big), \dots, u_{m}\big(t, x_{m +1} (-\tau_{m +1}, 0,  0) \big)\Big)
\end{equation}
(this condition is replaced by the one $u_{m+1}(t, 1) = 0$ in the case $k = m$),
and
\begin{equation}\label{bdry-4}
u_{k+1}(t, 1) = \dots = u_{m}(t, 1) = 0.
\end{equation}
Let us point out that, by Lemma~\ref{lem-WP}, the closed-loop system of $u$ given by  $\partial_t u = \Sigma \partial_x u$ and the boundary conditions (\ref{bdry-1}-\ref{bdry-4})  is well-posed in the sense of Definition~\ref{def-broad}.
With this law of feedback, conditions $a_k)$, \dots, $a_1)$ hold.  It follows that
\begin{equation}\label{conclusion-kd-1}
u_{1}(T_{opt}, x) = \dots = u_{k}(T_{opt}, x)  = 0 \mbox{ in } (0, 1).
\end{equation}
We also derive from \eqref{bdry-4} \newtextb{using the characteristic method and the fact $C = 0$} that
\begin{equation*}
u_{k+1}(t, 0) = \dots = u_{m}(t, 0) = 0 \mbox{ for } t \ge \tau_{k + 1}
\end{equation*}
and from  (\ref{bdry-1}-\ref{bdry-3})  (see also (\ref{kd-1}-\ref{kd-3})) that
\begin{equation*}
u_{k+1}(t, 0) = \dots = u_{k + m}(t, 0) = 0 \mbox{ for }  t \ge T_{opt}.
\end{equation*}
We then obtain
\begin{equation}\label{conclusion-kd-2}
u_{k+1}(T_{opt}, x) = \dots = u_{k+m}(T_{opt}, x) = 0 \mbox{ for } x \in (0, 1).
\end{equation}
The null-controllability attained at the optimal time $T_{opt}$ now follows from \eqref{conclusion-kd-1} and \eqref{conclusion-kd-2}.

We next deal with the case $m < k$. The construction of a time independent feedback yielding a null-state at the time $t = T_{opt}$ in this case is based on the construction given in the case $m = k$ obtained previously. Set
$$
\hat u(t, x) = \big(u_{k-m +1},  \dots u_{k + m} \big)\tr (t, x) \mbox{ in } (0, T) \times (0, 1),
$$
$$
\hat \Sigma(x) = \mbox{ diag } (-\lambda_{k-m+1}, \cdots, - \lambda_k, \lambda_{k+1, } \cdots, \lambda_{m+k})(x) \mbox{ in }   (0, 1),
$$
and
$$
\mbox{$\hat B$ is the matrix formed from the last $m$ rows of $B$}.
$$
Then $\hat u$ is a bounded  broad solution of the system
\begin{equation*}
\partial_t \hat u(t, x) = \hat \Sigma(x) \partial_x \hat u(t, x),
\end{equation*}
with the boundary condition at $0$ given by $(\hat u_1, \cdots, \hat u_{m})(t, 0)\tr  = \hat B (\hat u_{m+1}, \cdots, \hat u_{2m})(t, 0)\tr $.
Consider the time dependent feedback for $\hat u$ constructed previously. Then, as in Section~\ref{sect-m<k}, the null-controllability is attained at $T_{opt}$ for this feedback. The details are omitted.

We next establish the second part of Proposition~\ref{pro-C} by contradiction. We  only deal with the case $m \ge k$. We first consider the case $T_{opt} = \max_{1 \le i \le k} \{ \tau_i + \tau_{i + m} \}$.  Fix $T \in \big(\max_{1 \le i \le k} \tau_{i + m}, T_{opt} \big)$  and let $1 \le i_0  \le k$ be such that $\tau_{i_0}  + \tau_{i_0 + m}= T_{opt}$.
Consider an initial datum $u$ such that $u_{i} (t = 0, x)=  0$ for  $x \in (0, 1)$ and for $1 \le i \neq i_0 + m \le k + m$ and $u_{i_0 + m}(t=0, x) = 1$ for $x \in (0, 1)$. Assume that  the null-controllability is attained at $T$.  By the convention of $\lambda_j$, one has, for some $\eps > 0$ depending on $\Sigma$,
$$
u_{i_0}(t, 0) = u_{i_0 + 1}(t, 0) = \dots = u_{k}(t, 0) = 0  \mbox{ for } t \in (T- \tau_{i_0}, T - \tau_{i_0} + \eps).
$$
As in \eqref{kd-1}, \eqref{kd-2}, and \eqref{kd-3}, we obtain, for $t \in (T- \tau_{i_0}, T - \tau_{i_0} + \eps)$,
\begin{equation*}
u_{m+k}(t, 0) = M_k (u_{k +1 }, \cdots, u_{m + k - 1})\tr (t, 0),
\end{equation*}
\begin{equation*}
u_{m+k-1}(t, 0) = M_{k-1} (u_{k +1 }, \cdots, u_{m + k - 2})\tr (t, 0),
\end{equation*}
\dots
\begin{equation}\label{kd-3-1}
u_{m+i_0}(t, 0) = M_{i_0} (u_{k +1 }, \cdots, u_{m  + i_0 - 1 })\tr (t, 0).
\end{equation}
Since $u_1(0, \cdot ) = \dots = u_{m+ i_0 - 1}(0, \cdot) = 0$, it follows form \eqref{kd-3-1} that
\begin{equation}\label{proC-1}
u_{m+i_0}(t, 0)  = 0 \mbox{ for } t \in (T- \tau_{i_0}, T - \tau_{i_0} + \eps).
\end{equation}
On the other hand, by using the characteristic method and the fact $T< \tau_{i_0} + \tau_{i_0 + m}$, one has, for $\eps $ small enough,
$$
u_{i_0 + m}(t, 0)  = 1 \mbox{ for } t \in (T- \tau_{i_0}, T - \tau_{i_0} + \eps).
$$
This contradicts \eqref{proC-1}. The second part of Proposition \ref{pro-C} is proved in this case.

We next consider the case $T_{opt} > \max_{1 \le i \le k} \{ \tau_i + \tau_{i + m} \}$. Then $T_{opt} = \tau_{k+1}$ and $m>k$.  The conclusion follows by considering $u_{i}(0, x) = 1$ for $1 \le i \neq k+1 \le k+m$ and $u_{k+1}(0, x ) = 1$.
\proofend

\medskip In what follows, we present two  concrete examples on the feedback form used in the context of Proposition~\ref{pro-C}. We first consider the case where $k = 1$, $m =2$,
$$
\Sigma_+ = \mbox{diag} (1, 2) \quad \mbox{ and } \quad B = (2, 1).
$$
One can check that \eqref{kd-1} has the form
\begin{equation*}
u_3(t, 0) = - 2 u_2(t, 0).
\end{equation*}
The feedback is then  given by
$$
u_3(t, 1) =  - 2 u_2(t, 1/2) \quad \mbox{ and } \quad u_2(t, 1) = 0 \mbox{ for } t \ge 0.
$$
We next consider the case where $k = 3$, $m = 3$,
$$
\Sigma_+ = \mbox{diag} (1, 2, 4) \mbox{ and the matrix formed from the last two rows of $B$ is } \left(\begin{array}{cccc}
2 & 0 & 1  \\[6pt]
-1 & -1 & 1
\end{array}\right).
$$
One can check that \eqref{kd-1} has the form (by imposing the condition $u_3(t, 0) = 0$)
\begin{equation*}
u_6(t, 0) = u_5(t, 0) + u_4(t, 0),
\end{equation*}
\eqref{kd-2} has the form (by imposing the condition $u_3(t, 0) = u_2(t, 0) = 0$)
\begin{equation*}
u_5(t, 0) = - 3 u_4(t, 0).
\end{equation*}
The feedback is then given by
$$
u_6(t, 1) =   u_5(t, 1/2) + u_4 (t, 1/4),  \quad  u_5(t, 1) = - 3 u_4(t, 1/2), \quad  \mbox{ and } \quad u_4(t, 1) = 0  \mbox{ for } t \ge 0.
$$
One can verify directly that  the null-controllability is reached for these feedbacks.


\end{document}